\newtheorem{theorem}{Theorem}[section]
\newtheorem{corollary}[theorem]{Corollary}
\theoremstyle{definition}
\newtheorem{definition}[theorem]{Definition}
\theoremstyle{remark}
\numberwithin{equation}{section}
\begin{document}

\title{Rectifying curves under conformal transformation}

\author{Absos Ali Shaikh$^1$, Mohamd Saleem Lone$^2$ and Pinaki Ranjan Ghosh$^{3}$}
\address{$^1$Department of Mathematics, University of
Burdwan, Golapbag, Burdwan-713104, West Bengal, India}
\email{aask2003@yahoo.co.in, aashaikh@math.buruniv.ac.in}
\address{$^2$International Centre for Theoretical Sciences, Tata Institute of Fundamental Research, 560089, Bengaluru, India}
\email{saleemraja2008@gmail.com, mohamdsaleem.lone@icts.res.in}
\address{$^3$Department of Mathematics, University of
Burdwan,Golapbag, Burdwan-713104, West Bengal, India}
\email{mailtopinaki94@gmail.com}


\subjclass[2000]{53A04, 53A05, 53A15}



\keywords{Conformal map, homothetic conformal map, rectifying curve, normal curvature, geodesic curvature}

\begin{abstract}
The main aim of this paper is to investigate the nature of invariancy of rectifying curve under conformal transformation and obtain a sufficient condition for which such a curve remains conformally invariant. It is shown that the normal component and the geodesic curvature of the rectifying curve is homothetic invariant.
\end{abstract}

\maketitle
\section{Introduction} 
In geometry one of the most important field is the study of the differential geometric properties of smooth maps between surfaces(manifolds). Two surfaces ${\bf M}$ and ${\overline{\bf M}}$ are said to be mapped onto one another if there is a one-to-one correspondence between their points. Out of the many, the interesting one's are those which preserves certain geometric properties. With respect to fundamental forms, mean curvature $(H)$ and the Gaussian curvatures $(K)$, we broadly classify the motions(transformations) as isometric, conformal and non-conformal(general motion). Isometry preserves length as well as the angle between the curves on the surface. Geometrically, isometry preserves $K$ but not $H$. A best known example of such an isometry is of a plane and a cylinder. The most important type of transformations is conformal, where only angles are preserved both in magnitude and orientation but not necessarily distances. One of the simplest example of conformal maps is stereographic projection. This property of conformal maps is believed to be first used by Gerardus Mercator to produce the famous Mercator's world map of 1569, the first angle-preserving (or conformal) world map. For animated explaination and application of the conformal maps, we refer to see \cite{1}. In case of general motions neither angles nor distances are preserved between any intersecting pair of curves on a surface. Throughout the paper by $\bf M$ and $\overline{\bf M}$ we mean surfaces immersed in $\mathbb{E}^3$ and all the geometric objects on $\overline{\bf M}$ will be denoted by the notation $'-'$ bar.

Let $I\subset\mathbb{R}$ be an interval and $\alpha:I\subset \mathbb{R}\rightarrow \mathbb{E}^3$ be a unit speed smooth curve. Let $\vec{t}$, $\vec{n}$ and $\vec{b}$ be respectively the unit tangent, normal and binormal vector to the curve $\alpha(s)$ at any point $\alpha(s)$ such that $\{ \vec{t},\vec{n},\vec{b}\}$ acts as its Serret-Frenet frame. Then, the Serret-Frenet formulae are given by 
\begin{eqnarray*}
\left\{
\begin{array}{ll}
\vec{t}^\prime =\kappa \vec{n}\\
\vec{n}^\prime = -\kappa \vec{t} +\tau \vec{b}\\
\vec{b}^\prime =-\tau \vec{n},
\end{array}
\right.
\end{eqnarray*}
where $\kappa$ is the curvature and $\tau$ is the torsion of $\alpha$ with $\vec{t}=\alpha^\prime, \vec{n}=\frac{\vec{t}^\prime}{\kappa}$ and $\vec{b}=\vec{t}\times \vec{n}$, and $'\prime\ '$ denotes $\frac{d}{ds}$.
 At every point of $\alpha(s)$ the planes spanned by $\{\vec{n},\vec{b}\}$, $\{\vec{t},\vec{n}\}$ and $\{\vec{t},\vec{b}\}$ are respectively called the normal plane, the osculating plane and the rectifying plane. Also if at each point the position vector of $\alpha$ lies in the osculating plane (respectively, normal plane), then the curve lies in a plane (respectively, on a sphere). In 2003 Chen \cite{2} introduced the notion of rectifying curves and obtained their characterization. For generic study, we refer the reader to see \cite{3,4}.

In 2018 Shaikh and Ghosh \cite{8} studied invariancy of rectifying curves under surface isometry and showed that the normal component of such a curve is invariant under isometry. Also Shaikh and Ghosh \cite{aas8B} investigated the invariancy of osculating curves under surface isometry. Again in \cite{9} Shaikh {\it et al.} studied normal curves under isometric motion. Motivated by the above studies we investigate the nature of rectifying curves under conformal transformation and provide the answer of the following question.

{\it Question:} What happens to a rectifying curve on a smooth immersed surface with respect to a conformal transformation?

 We obtain a sufficient condition for a rectifying curve on a surface to be invariant under conformal map. It is shown that the normal component and the geodesic curvature of a rectifying curve are homothetic invariant.
The structure of this paper is as follows. Section $2$ is devoted to some rudimentary facts about the curves lying on a surface. In section $3$, we discuss the main results.

\section{Preliminaries}
\begin{definition} A diffeomorphism $J:{\bf M}\rightarrow \overline{{\bf M}}$ is called an isometry if for all $p \in {\bf M }$ and $x_1,x_2 \in T_p{\bf M}$, the following holds: $$\langle x_1, x_2 \rangle_p =\langle dJ_p(x_1),dJ_p(x_2) \rangle_{J(p)},$$ $T_p\bf M$ being the tangent plane at $p\in M$.
In this case the surfaces ${\bf M}$ and $\overline{{\bf M}}$ are said to be isometric.
\end{definition}
\begin{definition}
A diffeomorphism $J:V \subset {\bf M} \rightarrow \overline{{\bf M}}$ of a neighborhood $V$ of $p \in {\bf M}$ is called a local isometry at $p$ if there exists a neighborhood $U$ of $J(p) \in \overline{\bf M}$ such that $J: V \rightarrow U$ is an isometry. If there exists local isometry at every point of ${\bf M}$, then ${\bf M}$ and $\overline{{\bf M}}$ are said to be locally isometric. Clearly, if $J$ is a local isometry at every point of ${\bf M}$, then $J$ is called a global isometry.
\end{definition}
It is well-known that the coefficients of the first fundamental form of a surface are invariant under isometry. Hence if $E,F,G$ and $\overline{E},\overline{F},\overline{G}$ are the coefficients of first fundamental form of ${\bf M}$ and $\overline{{\bf M}}$, respectively and $J:{\bf M}\rightarrow \overline{{\bf M}}$ is a local isometry, then 
\begin{equation*}
E=\overline{E},\quad F=\overline{F},\quad G=\overline{G}.
\end{equation*}
\begin{definition} A diffeomorphism $J:{\bf M}\rightarrow \overline{{\bf M}}$ is called a conformal map if for all $p \in {\bf M }$ and $x_1,x_2 \in T_p{\bf M}$, 
$$\lambda^2\langle dJ_p(x_1),dJ_p(x_2) \rangle_{J(p)}=\langle x_1, x_2 \rangle_p $$ holds, where $\lambda^2$ is a differentiable function on ${\bf M}$ and is sometimes called as dilation or scale factor. If such a diffeomorphism exists for each $p \in {\bf M}$, then ${\bf M}$ and $\overline{{\bf M}}$ are said to be conformal(locally).  Thus a conformal transformation is the composition of an isometric transformation and a dilation and if the dilation factor is identity, then it coincides with isometry. Geometrically, conformal maps preserve angles but not necessarily the lengths. In this case \cite{5}, we have 
\begin{equation*}
\lambda^2E=\overline{E},\quad  \lambda^2F=\overline{F},\quad  \lambda^2G=\overline{G},
\end{equation*}
and we call that the coefficients of the first fundamental form are conformally invariant.
\end{definition}
A necessary and sufficient condition for $J$ to be conformal is that the area elements of arcs on ${\bf M}$ and $\overline{\bf M}$ are proportional and the ratio is equal to the dilation factor, i.e., $\frac{ds}{d\overline{s}}=\lambda(u,v)$. If the dilation factor is a non-zero constant(say $c$) for all the points of the surface, then the conformal map is called homothetic. The conformal map reduces to an isometry when the dilation factor is equal to $1$. In other words isometric maps can be considered as a subset of conformal maps with the dilation factor $\lambda=1$ \cite{6}.

\begin{definition}\label{def1}
Let $f:{\bf M}\rightarrow \overline{\bf M}$ be a conformal(or homothetic) map between two smooth surfaces and $\overline{f}=f\circ\varphi$, where $\varphi$ is a surface patch of $M$, then we say that $f$ is conformally(or homothetic) invariant if $\overline{f}=\lambda^2f($ or$\overline{f}=c^2f)$ for some dilation factor $\lambda(\text{or }c\neq \{0,1\})$. 
\end{definition}

\begin{definition}
 A curve $\alpha:I\rightarrow\mathbb{E}^3,\ I\subset\mathbb{R},$ is said to be a rectifying curve if its position vector lies in the orthogonal complement of normal vector $\vec{n}$ i.e., $\alpha \cdot \vec{n} =0,$ or
\begin{equation}\label{1}
\alpha(s)=\xi(s)\vec{t}(s)+ \mu(s)\vec{b}(s),
\end{equation}
where $\xi,$ $\mu$ are smooth functions.
\end{definition}
Suppose ${\bf M}$ is a regular surface([page no 52, \cite{5}]) with 
$\varphi(u,v)$ being its coordinate chart. Then, the curve 
$\alpha(s)=\varphi(u(s),v(s))$ defines a curve 
on the surface ${\bf M}$. We can easily find the derivatives of the curve $\alpha(s)$ as a curve on the surface ${\bf M}$ using the chain rule:
\begin{eqnarray}
\nonumber\alpha^\prime(s)&=&\varphi_uu^\prime+\varphi_vv^\prime\\
\nonumber\text{or }&&\\
\label{2} \vec{t}(s)&=&\alpha^\prime(s)=\varphi_uu^\prime+\varphi_vv^\prime\\
\nonumber
 {\vec{t}\ '}(s)&=& u^{\prime\prime}\varphi_u+v^{\prime\prime}\varphi_v+{u^\prime}^2\varphi_{uu}+2u^\prime v^\prime \varphi_{uv}+{v^\prime}^2\varphi_{vv}.
\end{eqnarray}
If ${\bf N}$ is the surface normal, then we have
\begin{eqnarray}\label{3}
\nonumber
\vec{n}(s)&=&\frac{1}{k(s)}(u''\varphi_u+v''\varphi_v+u'^2\varphi_{uu}+2u'v'\varphi_{uv}+v'^2\varphi_{vv}).\\
\nonumber
\vec{b}(s)&=& \vec{t}(s)\times \vec{n}(s)= \vec{t}(s)\times \frac{{\vec{t}}^\prime(s)}{k(s)},\\
\nonumber
&=&\frac{1}{k(s)}\Big[(\varphi_uu'+\varphi_vv')\times(u''\varphi_u+v''\varphi_v+u'^2\varphi_{uu}+2u'v'\varphi_{uv}+v'^2\varphi_{vv})\Big],\\
\nonumber&=&\frac{1}{k(s)}\Big[\{u'v''-u''v'\}{\bf N}+u'^3\varphi_u\times \varphi_{uu}+2u'^2v'\varphi_u\times \varphi_{uv}+u'v'^2\varphi_u\times \varphi_{vv}\\
&&+u'^2v'\varphi_v\times \varphi_{uu}+2u'v'^2\varphi_v\times \varphi_{uv}+v'^3\varphi_v\times \varphi_{vv}\Big].
\end{eqnarray}
\par
Since $\alpha(s)$ is an unit speed curve on the surface, so $\alpha''\perp\alpha'$ and hence $\alpha''$ lies in the plane spanned by $N$ and $\alpha'\times N$, i.e.,
\begin{equation*}
\alpha^{\prime\prime}=\kappa_{n}{\bf N}+\kappa_g{\bf N}\times \alpha^\prime,
\end{equation*}
where $\kappa_n$ and $\kappa_g$ are respectively called the normal curvature and the geodesic curvature of $\alpha$. 
Since $\alpha^{\prime \prime}=\kappa(s)\vec{n}(s)$, we can write
\begin{equation*}
\kappa_n=\kappa(s)\vec{n}(s)\cdot {\bf N}=(u''\varphi_u+v''\varphi_v+u'^2\varphi_{uu}+2u'v'\varphi_{uv}+v'^2\varphi_{vv})\cdot{\bf N}
\end{equation*}
or 
\begin{equation}\label{se1}
\kappa_n = {u^\prime}^2 L + 2u^\prime v^\prime M +{v^\prime}^2N,
\end{equation}
where $L,M,N$ are the coefficients of the second fundamental form of the surface. The curve $\alpha$ on $\bf{M}$ is said to be asymptotic if and only if $\kappa_n=0.$

\section{Conformal image of a rectifying curve}
Let $\alpha(s)$ be a rectifying curve lying on a smooth immersed surface ${\bf M}$ in $\mathbb{E}^3$. Then by virtue of (\ref{1}), (\ref{2}) and (\ref{3}), we obtain
\begin{eqnarray}\label{2.1}
\nonumber
\alpha(s)&=&\xi(s)(\varphi_uu'+\varphi_vv')+\frac{\mu(s)}{k(s)}\Big[\{u'v''-u''v'\}{\bf N}+u'^3\varphi_u\times \varphi_{uu}+2u'^2v'\varphi_u\times \varphi_{uv}\\
&&+u'v'^2\varphi_u\times \varphi_{vv}+u'^2v'\varphi_v\times \varphi_{uu}+2u'v'^2\varphi_v\times \varphi_{uv}+v'^3\varphi_v\times \varphi_{vv}\Big].
\end{eqnarray}
In the following theorem we consider the expression $J_*(\alpha(s))$ as a product of a $3\times 3$ matrix $J_*$ and a $3\times 1$ matrix $\alpha(s)$.

\begin{theorem}
Let $J:{\bf M}\rightarrow {\overline{\bf M}}$ be a conformal map between two smooth immersed surfaces ${\bf M}$ and $\overline{\bf M}$ in $\mathbb{E}^3$ and $\alpha(s)$ be a rectifying curve on ${\bf M}$. Then $\overline{\alpha}(s)$ is a rectifying curve on $\overline{\bf M}$, if
\begin{eqnarray}\label{j2.2}
\nonumber
\overline{\alpha}&=&\frac{\mu}{\kappa}\Big[{u^\prime}^3 \lambda J_\ast \varphi_u \times \left(\lambda_u J_\ast \varphi_u +  \lambda\frac{\partial J_*}{\partial u}\varphi_u\right)+2{u^\prime}^2 v^\prime \lambda J_\ast \varphi_u \times \left(\lambda_v J_\ast \varphi_u +  \lambda\frac{\partial J_*}{\partial v}\varphi_u\right)\\
\nonumber &&+u^\prime {v^\prime}^2\lambda J_\ast \varphi_u \times \left(\lambda_v J_\ast \varphi_v +  \lambda\frac{\partial J_*}{\partial v}\varphi_v\right)+{u^\prime}^2v^\prime \lambda J_\ast \varphi_v \times \left(\lambda_u J_\ast \varphi_u +  \lambda\frac{\partial J_*}{\partial u}\varphi_u\right)\\
\nonumber &&+2u^\prime {v^\prime}^2\lambda J_\ast \varphi_v \times \left(\lambda_u J_\ast \varphi_v +  \lambda\frac{\partial J_*}{\partial u}\varphi_v\right)+{v^\prime}^3\lambda J_\ast \varphi_v \times \left(\lambda_v J_\ast \varphi_v +  \lambda\frac{\partial J_*}{\partial v}\varphi_v\right)\Big]\\&&+\lambda J_\ast(\alpha).
\end{eqnarray}
\end{theorem}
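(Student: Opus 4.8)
The plan is to read the right-hand side of \eqref{j2.2} as the Serret--Frenet decomposition of the image curve $\overline{\alpha}$ on $\overline{\bf M}$ and to check that it carries no $\overline{\vec n}$-component; since a curve is rectifying exactly when its position vector has the form $\overline{\xi}\,\overline{\vec t}+\overline{\mu}\,\overline{\vec b}$, this is precisely what must be established. I take the induced chart $\overline{\varphi}=J\circ\varphi$ on $\overline{\bf M}$, so that $\overline{\varphi}_u=J_\ast\varphi_u$ and $\overline{\varphi}_v=J_\ast\varphi_v$, and I use the conformal relation $\frac{ds}{d\overline{s}}=\lambda$ from Section~2 throughout.

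First I would compute the image unit tangent. Differentiating $\overline{\alpha}(s)=\overline{\varphi}(u(s),v(s))$ by the chain rule and converting to the arclength $\overline{s}$ of $\overline{\alpha}$ via $\frac{ds}{d\overline{s}}=\lambda$ gives $\overline{\vec t}=\lambda(\overline{\varphi}_uu'+\overline{\varphi}_vv')=\lambda J_\ast(\varphi_uu'+\varphi_vv')=\lambda J_\ast\vec t$; the conformal identities $\lambda^2E=\overline E$, $\lambda^2F=\overline F$, $\lambda^2G=\overline G$ confirm $|\overline{\vec t}|=1$. This already explains the factors $\lambda J_\ast\varphi_u$ and $\lambda J_\ast\varphi_v$ sitting in the first slot of every cross product in \eqref{j2.2}.

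The main step --- and the step I expect to be the principal obstacle --- is the image binormal $\overline{\vec b}=\overline{\vec t}\times\frac{1}{\overline{\kappa}}\frac{d\overline{\vec t}}{d\overline{s}}$. Here I would differentiate $\overline{\vec t}=\lambda J_\ast(\varphi_uu'+\varphi_vv')$ with respect to $\overline{s}$, once more through $\frac{ds}{d\overline{s}}=\lambda$ and the product rule applied to the matrix-valued factor $\lambda J_\ast$. This is precisely where the terms $\lambda_u,\lambda_v$ and $\frac{\partial J_\ast}{\partial u},\frac{\partial J_\ast}{\partial v}$ are produced: each $u$- or $v$-derivative of $\lambda J_\ast\varphi_u$ generates the combination $\lambda_uJ_\ast\varphi_u+\lambda\frac{\partial J_\ast}{\partial u}\varphi_u$ (and its companions) that occupies the second slot of the cross products. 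Forming $\overline{\vec t}\times\frac{d\overline{\vec t}}{d\overline{s}}$ annihilates all self-parallel pairs and leaves exactly the six weighted cross-product terms of the bracket, with the powers ${u'}^3,{u'}^2v',u'{v'}^2,{v'}^3$ inherited from the unbarred formula \eqref{3}. The delicate point is that $J_\ast$ is conformal but not an ambient isometry, so $\overline{\vec b}\neq J_\ast\vec b$ in general and the derivative-of-$J_\ast$ contributions cannot be discarded; carrying $\frac{\partial J_\ast}{\partial u}$ and $\frac{\partial J_\ast}{\partial v}$ correctly through the cross product is the heart of the calculation.

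Finally I would reassemble. Imposing the rectifying ansatz $\overline{\alpha}=\overline{\xi}\,\overline{\vec t}+\overline{\mu}\,\overline{\vec b}$, inserting $\overline{\vec t}=\lambda J_\ast\vec t$ and the bracket expression for $\overline{\kappa}\,\overline{\vec b}$, and using $\alpha=\xi\vec t+\mu\vec b$ to gather the surviving pushforward contributions into the single term $\lambda J_\ast(\alpha)$, produces \eqref{j2.2} with scalar prefactor $\frac{\mu}{\kappa}$ (equivalently $\frac{\overline{\mu}}{\overline{\kappa}}=\frac{\mu}{\kappa}$). The remaining bookkeeping is to confirm that, after this regrouping, the coefficient of $\overline{\vec n}$ in $\overline{\alpha}$ is identically zero --- that is, that the term $\lambda J_\ast(\alpha)$ together with the bracket lies entirely in $\mathrm{span}\{\overline{\vec t},\overline{\vec b}\}$. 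Once this is verified, $\overline{\alpha}$ satisfies $\overline{\alpha}\cdot\overline{\vec n}=0$ and is therefore a rectifying curve on $\overline{\bf M}$, as asserted.
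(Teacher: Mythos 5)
Your computational skeleton is the same as the paper's: both arguments rest on $\overline{\varphi}_u=\lambda J_\ast\varphi_u$, $\overline{\varphi}_v=\lambda J_\ast\varphi_v$, their $u$- and $v$-derivatives (which is exactly where $\lambda_u,\lambda_v,\frac{\partial J_\ast}{\partial u},\frac{\partial J_\ast}{\partial v}$ enter), and the resulting six cross-product identities of the form
\[
\lambda J_\ast\varphi_u\times\Big(\lambda_uJ_\ast\varphi_u+\lambda\tfrac{\partial J_\ast}{\partial u}\varphi_u\Big)=\overline{\varphi}_u\times\overline{\varphi}_{uu}-\lambda J_\ast(\varphi_u\times\varphi_{uu}),
\]
which convert the bracket in \eqref{j2.2}, together with the expansion of $\lambda J_\ast(\alpha)$ via \eqref{2.1}, into the barred analogue of \eqref{2.1}. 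So you have identified the right identities and the right role for the derivative of $J_\ast$.

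The gap is in the logic of your final paragraph. The theorem asserts sufficiency: \emph{if} $\overline{\alpha}$ equals the right-hand side of \eqref{j2.2}, \emph{then} it is rectifying. You instead ``impose the rectifying ansatz $\overline{\alpha}=\overline{\xi}\,\overline{\vec{t}}+\overline{\mu}\,\overline{\vec{b}}$'' and derive \eqref{j2.2} from it --- that is the converse implication --- and then propose to ``confirm that the coefficient of $\overline{\vec{n}}$ is zero,'' which is vacuous once the ansatz has been assumed. The paper runs the computation in the forward direction: starting from \eqref{j2.2}, it expands $\lambda J_\ast(\alpha)$ using the rectifying decomposition of $\alpha$ on ${\bf M}$, applies the six cross-product identities, and regroups everything as $\xi(\overline{\varphi}_uu'+\overline{\varphi}_vv')+\frac{\mu}{\kappa}[\cdots]=\overline{\xi}\,\overline{\vec{t}}+\frac{\overline{\mu}}{\overline{\kappa}}\overline{\vec{b}}$, so the absence of an $\overline{\vec{n}}$-component is manifest rather than something left to check. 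Since every step is an identity, your argument can be repaired by reversing it, but as written it proves the wrong implication. A second, smaller issue: you obtain $\overline{\vec{t}}$ by reparametrizing with $\frac{ds}{d\overline{s}}=\lambda$ \emph{and} writing $\overline{\varphi}_u=J_\ast\varphi_u$, whereas the paper absorbs the dilation into $\overline{\varphi}_u=\lambda J_\ast\varphi_u$ and keeps the parameter $s$; mixing the two conventions double-counts a factor of $\lambda$, and you need to fix one convention before the bookkeeping of the coefficients ${u'}^3,\dots,{v'}^3$ in \eqref{j2.2} can be trusted.
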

\begin{proof}
Let $\overline{\bf M}$ be the conformal image of ${\bf M}$ and $\varphi(u,v)$ and $\overline{\varphi}(u,v)=J\circ \varphi(u,v) $ be the surface patches of ${\bf M}$ and $\overline{{\bf M}},$ respectively. Then the differential map $dJ=J_\ast$ of $J$ sends each vector of the tangent plane $T_p{\bf M}$ to a dilated tangent vector of the tangent plane of $T_{J(p)}\overline{\bf M}$ with the dilation factor $\lambda$. Also
\begin{eqnarray}\label{2.2}
\overline{\varphi}_u(u,v)&=&\lambda(u,v) J_*(\varphi(u,v))\varphi_u,\\
\label{2.3}\overline{\varphi}_v(u,v)&=&\lambda(u,v) J_*(\varphi(u,v))\varphi_v.
\end{eqnarray}
Differentiating $(\ref{2.2})$ and $(\ref{2.3})$ partially with respect to both $u$ and $v$ respectively, we get
\begin{eqnarray}\label{q3.7}
\nonumber \overline{\varphi}_{uu}&=& \lambda_u J_\ast \varphi_u +  \lambda\frac{\partial J_*}{\partial u}\varphi_u+\lambda J_*\varphi_{uu}\\
\overline{\varphi}_{vv}&=&\lambda_v J_\ast \varphi_v+ \lambda\frac{\partial J_*}{\partial v}\varphi_v+\lambda J_*\varphi_{vv}\\
\nonumber \overline{\varphi}_{uv}&=& \lambda_u J_\ast \varphi_v+ \lambda\frac{\partial J_*}{\partial u}\varphi_v+\lambda J_*\varphi_{uv}\\
\nonumber &=& \lambda_v J_\ast \varphi_u+ \lambda\frac{\partial J_*}{\partial v}\varphi_u +\lambda J_*\varphi_{uv}.\\\nonumber
 \end{eqnarray}
We can write
\begin{eqnarray}\label{j2.6}
\nonumber \lambda J_\ast \varphi_u \times \left(\lambda_u J_\ast \varphi_u+\lambda\frac{\partial J_\ast}{\partial u}\varphi_u\right)
 &=&\lambda J_\ast \varphi_u \times \left(\lambda_u J_\ast \varphi_u+\lambda\frac{\partial J_\ast}{\partial u}\varphi_u+\lambda J_\ast \varphi_{uu}\right)\\
\nonumber &&-\lambda J_\ast (\varphi_u \times \varphi_{uu})\\
&=&\overline{\varphi}_u \times \overline{\varphi}_{uu}-\lambda J_\ast(\varphi_u \times \varphi_{uu}).
\end{eqnarray} 
Similarly
\begin{eqnarray}\label{j2.7}
\left\{
\begin{array}{ll}
\lambda J_\ast \varphi_u \times \left(\lambda_v J_\ast \varphi_u+\lambda\frac{\partial J_\ast}{\partial v}\varphi_u\right)=\overline{\varphi}_u \times \overline{\varphi}_{uv}-\lambda J_\ast(\varphi_u \times \varphi_{uv})\vspace{.2cm}\\
\lambda J_\ast \varphi_u \times \left(\lambda_v J_\ast \varphi_v+\lambda\frac{\partial J_\ast}{\partial v}\varphi_v\right)=\overline{\varphi}_u \times \overline{\varphi}_{vv}-\lambda J_\ast(\varphi_u \times \varphi_{vv})\vspace{.2cm}\\
\lambda J_\ast \varphi_v \times \left(\lambda_u J_\ast \varphi_u+\lambda\frac{\partial J_\ast}{\partial u}\varphi_u\right)=\overline{\varphi}_v \times \overline{\varphi}_{uu}-\lambda J_\ast(\varphi_v \times \varphi_{uu})\vspace{.2cm}\\
\lambda J_\ast \varphi_v \times \left(\lambda_u J_\ast \varphi_v+\lambda\frac{\partial J_\ast}{\partial u}\varphi_v\right)=\overline{\varphi}_v \times \overline{\varphi}_{uv}-\lambda J_\ast(\varphi_v \times \varphi_{uv})\vspace{.2cm}\\
\lambda J_\ast \varphi_v \times \left(\lambda_v J_\ast \varphi_v+\lambda\frac{\partial J_\ast}{\partial v}\varphi_v\right)=\overline{\varphi}_v \times \overline{\varphi}_{vv}-\lambda J_\ast(\varphi_v \times \varphi_{vv})\vspace{.2cm}.\\
\end{array}
\right.
\end{eqnarray} 
 Therefore in view of (\ref{j2.2}), (\ref{j2.6}) and (\ref{j2.7}), we have
\begin{eqnarray*}
\nonumber
\overline{\alpha}&=&\xi(s)(\lambda J_\ast \varphi_uu'+\lambda J_\ast \varphi_vv')+\frac{\mu}{\kappa}\Big[\{u'v''-u''v'\}\lambda^2 J_\ast {\bf N}+u'^3\lambda J_\ast (\varphi_u\times \varphi_{uu})\\
&&+2u'^2v'\lambda J_\ast ( \varphi_u\times \varphi_{uv})
+u'v'^2\lambda J_\ast (\varphi_u\times \varphi_{vv})+u'^2v'\lambda J_\ast (\varphi_v\times \varphi_{uu})\\
&&+2u'v'^2\lambda J_\ast (\varphi_v\times \varphi_{uv})+v'^3\lambda J_\ast( \varphi_v\times \varphi_{vv})\Big]+\frac{\mu}{\kappa}\Big[{u^\prime}^3 \lambda J_\ast \varphi_u \times \left(\lambda_u J_\ast \varphi_u +  \lambda\frac{\partial J_*}{\partial u}\varphi_u\right)\\
&&+2{u^\prime}^2 v^\prime \lambda J_\ast \varphi_u \times \left(\lambda_v J_\ast \varphi_u +  \lambda\frac{\partial J_*}{\partial v}\varphi_u\right)
+u^\prime {v^\prime}^2\lambda J_\ast \varphi_u \times \left(\lambda_v J_\ast \varphi_v +  \lambda\frac{\partial J_*}{\partial v}\varphi_v\right)\\
&&+{u^\prime}^2v^\prime \lambda J_\ast \varphi_v \times \left(\lambda_u J_\ast \varphi_u +  \lambda\frac{\partial J_*}{\partial u}\varphi_u\right)
+2u^\prime {v^\prime}^2\lambda J_\ast \varphi_v \times \left(\lambda_u J_\ast \varphi_v +  \lambda\frac{\partial J_*}{\partial u}\varphi_v\right)\\
&&+{v^\prime}^3\lambda J_\ast \varphi_v \times \left(\lambda_v J_\ast \varphi_v +  \lambda\frac{\partial J_*}{\partial v}\varphi_v\right)\Big],
\end{eqnarray*}
which can be written as
\begin{eqnarray*}
\nonumber
\overline{\alpha}(s)&=&\xi(s)(\overline{\varphi}_uu'+\overline{\varphi}_vv')+\frac{\mu(s)}{k(s)}\Big[\{u'v''-u''v'\}\overline{{\bf N}}+u'^3\overline{\varphi}_u\times \overline{\varphi}_{uu}+2u'^2v'\overline{\varphi}_u\times \overline{\varphi}_{uv}\\
&&+u'v'^2\overline{\varphi}_u\times \overline{\varphi}_{vv}+u'^2v'\overline{\varphi}_v\times \overline{\varphi}_{uu}+2u'v'^2\overline{\varphi}_v\times \overline{\varphi}_{uv}+v'^3\overline{\varphi}_v\times \overline{\varphi}_{vv}\Big],
\end{eqnarray*}
or,
\begin{equation*}
\overline{\alpha}(s)=\overline{\xi}(s)\overline{\vec{t}}(s)+\frac{\overline{\mu}(s)}{\overline{\kappa}(s)}\overline{\vec{b}}(s),
\end{equation*}
for some smooth functions $\overline{\xi}(s)$ and $\overline{\mu}(s).$ Here and now onward, we assume $\overline{\xi} \approx  \xi$ and $\frac{\overline{\mu}(s)}{\overline{\kappa}(s)} \approx \frac{\mu}{\kappa}.$ Thus $\overline{\alpha}(s)$ is a rectifying curve.
\end{proof}
\begin{corollary}
Let $J:{\bf M}\rightarrow \overline{\bf M}$ be a homothetic map, where ${\bf M}$ and $\overline{\bf M}$ are smooth surfaces and $\alpha(s)$ be a rectifying curve on ${\bf M}$. Then $\overline{\alpha}(s)$ is a rectifying curve on $\overline{\bf M}$ if
\begin{eqnarray*}
\nonumber
\overline{\alpha}&=&\frac{\mu}{\kappa}\Big[{u^\prime}^3 c \left( J_\ast \varphi_u \times \frac{\partial J_*}{\partial u}\varphi_u\right)+2{u^\prime}^2 v^\prime c \left(J_\ast \varphi_u \times   c\frac{\partial J_*}{\partial v}\varphi_u\right)\\
\nonumber &&+u^\prime {v^\prime}^2c\left( J_\ast \varphi_u \times   c\frac{\partial J_*}{\partial v}\varphi_v\right)+{u^\prime}^2v^\prime c \left(J_\ast \varphi_v \times   c\frac{\partial J_*}{\partial u}\varphi_u\right)\\
\nonumber &&+2u^\prime {v^\prime}^2c \left(J_\ast \varphi_v \times  c\frac{\partial J_*}{\partial u}\varphi_v\right)+{v^\prime}^3c\left(J_\ast \varphi_v \times  c\frac{\partial J_*}{\partial v}\varphi_v\right)\Big]\\&&+c J_\ast(\alpha).
\end{eqnarray*}
\end{corollary}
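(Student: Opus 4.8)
The plan is to deduce the Corollary as a direct specialization of the preceding Theorem, using only the fact that a homothetic map is a conformal map whose dilation factor is a non-zero constant. By the discussion in Section~2, for a homothetic map one has $\lambda(u,v) \equiv c$ with $c \notin \{0,1\}$, so that $\lambda_u = \frac{\partial c}{\partial u} = 0$ and $\lambda_v = \frac{\partial c}{\partial v} = 0$ identically on $\mathbf{M}$. Accordingly, I would take the sufficient condition (\ref{j2.2}) furnished by the Theorem and simply evaluate it under these two vanishing relations.

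First I would substitute $\lambda = c$ and $\lambda_u = \lambda_v = 0$ into each of the six cross-product summands inside the bracket of (\ref{j2.2}). A generic summand there has the form $\lambda J_\ast \varphi_\bullet \times \left( \lambda_\bullet J_\ast \varphi_\bullet + \lambda \frac{\partial J_\ast}{\partial \bullet} \varphi_\bullet \right)$; the vanishing of $\lambda_\bullet$ removes the first inner term, and replacing $\lambda$ by $c$ collapses the summand to $c\, J_\ast \varphi_\bullet \times \left( c\, \frac{\partial J_\ast}{\partial \bullet} \varphi_\bullet \right)$. Carrying this out term by term for the coefficients $u'^3$, $2u'^2 v'$, $u' v'^2$, $u'^2 v'$, $2u' v'^2$ and $v'^3$ reproduces the bracketed expression of the Corollary verbatim.

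Next I would treat the trailing term $\lambda J_\ast(\alpha)$ of (\ref{j2.2}), which under $\lambda = c$ becomes $c\, J_\ast(\alpha)$. Assembling the specialized bracket together with this term gives precisely the displayed formula of the Corollary. Finally, since the Theorem asserts that any $\overline{\alpha}$ satisfying (\ref{j2.2}) is a rectifying curve on $\overline{\mathbf{M}}$, and the homothetic hypothesis is a special case of the conformal hypothesis, the conclusion that $\overline{\alpha}(s)$ is rectifying follows immediately.

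Because this is a pure specialization, I anticipate no genuine analytic difficulty; the only step needing care is the bookkeeping of the scale factor. Specifically, I would verify that every bracketed summand ends up carrying two factors of $c$ (one from the outer $\lambda$ and one from the surviving $\lambda \frac{\partial J_\ast}{\partial \bullet}$ term), while the $J_\ast(\alpha)$ term carries a single factor of $c$. This is also the point at which I would double-check the printed first summand, whose inner factor of $c$ appears to be absent and should read $c\, J_\ast \varphi_u \times \left( c\, \frac{\partial J_\ast}{\partial u} \varphi_u \right)$ for consistency with the remaining five.
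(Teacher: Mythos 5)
Your proposal is correct and coincides with the paper's own proof, which likewise obtains the corollary by substituting $\lambda=c$ (hence $\lambda_u=\lambda_v=0$) into (\ref{j2.2}). Your observation that the first bracketed summand in the printed statement is missing an inner factor of $c$ and should read $c\,J_\ast\varphi_u\times\left(c\,\frac{\partial J_\ast}{\partial u}\varphi_u\right)$ is also accurate; this is a typographical slip in the corollary as stated.
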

\begin{proof}
In case of a homothetic map the dilation function $\lambda=c \neq \{0,1\}$. Substituting in (\ref{j2.2}), we get the above expression.  
\end{proof}
\begin{corollary}\cite{8}
Let $J:{\bf M}\rightarrow \overline{\bf M}$ be an isometry, where ${\bf M}$ and $\overline{\bf M}$ are smooth surfaces and $\alpha(s)$ be a rectifying curve on ${\bf M}$. Then $\overline{\alpha}(s)$ is a rectifying curve on $\overline{\bf M}$ if
\begin{eqnarray*}
\overline{\alpha}(s)-J_*(\alpha(s))&=&\frac{\mu(s)}{k(s)}\Big[u'^3\Big(J_*\varphi_u\times \frac{\partial J_*}{\partial u}\varphi_u\Big)+2u'^2v'\Big(J_*\varphi_u\times \frac{\partial J_*}{\partial u}\varphi_v\Big)\\
&&+u'v'^2\Big(J_*\varphi_u\times \frac{\partial J_*}{\partial v}\varphi_v\Big)+u'^2v'\Big(J_*\varphi_v\times \frac{\partial J_*}{\partial u}\varphi_u\Big)\\
&&+2u'v'^2\Big(J_*\varphi_v\times \frac{\partial J_*}{\partial u}\varphi_v\Big)+v'^3\Big(J_*\varphi_v\times \frac{\partial J_*}{\partial v}\varphi_v\Big)\Big].
\end{eqnarray*}
\end{corollary}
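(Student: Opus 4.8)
The plan is to take the hypothesised formula (\ref{j2.2}) for $\overline{\alpha}$ as the starting point and show by direct algebraic manipulation that it collapses to the canonical decomposition of a rectifying curve on $\overline{\mathbf{M}}$. The first step is to record how the patch derivatives transform under $J$. Since $\overline{\varphi}=J\circ\varphi$, the chain rule yields $\overline{\varphi}_u=\lambda J_\ast\varphi_u$ and $\overline{\varphi}_v=\lambda J_\ast\varphi_v$, which are (\ref{2.2})--(\ref{2.3}). Differentiating these once more in $u$ and $v$ and applying the product rule gives (\ref{q3.7}), expressing each $\overline{\varphi}_{uu},\overline{\varphi}_{uv},\overline{\varphi}_{vv}$ as a clean term $\lambda J_\ast\varphi_{ij}$ plus the two ``extra'' terms $\lambda_i J_\ast\varphi_j+\lambda\,\tfrac{\partial J_\ast}{\partial i}\varphi_j$ produced by differentiating $\lambda$ and $J_\ast$.

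The central device is the family of identities (\ref{j2.6})--(\ref{j2.7}). For each ordered index pair I would add and subtract $\lambda J_\ast\varphi_{ij}$ inside the second factor of a cross product $\lambda J_\ast\varphi_k\times\bigl(\lambda_i J_\ast\varphi_j+\lambda\,\tfrac{\partial J_\ast}{\partial i}\varphi_j\bigr)$; by (\ref{q3.7}) the completed factor is exactly $\overline{\varphi}_{ij}$, while the subtracted piece contributes $-\lambda J_\ast(\varphi_k\times\varphi_{ij})$ once $\overline{\varphi}_k=\lambda J_\ast\varphi_k$ is used. This rewrites every mixed cross product occurring in (\ref{j2.2}) as a clean term $\overline{\varphi}_k\times\overline{\varphi}_{ij}$ minus a correction $\lambda J_\ast(\varphi_k\times\varphi_{ij})$.

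Next I would substitute these six identities into (\ref{j2.2}) and regroup. The six clean terms $\overline{\varphi}_k\times\overline{\varphi}_{ij}$, weighted by the appropriate powers of $u'$ and $v'$, assemble into precisely the binormal part appearing in the barred analogue of (\ref{3}). The six correction terms $-\tfrac{\mu}{\kappa}\lambda J_\ast(\varphi_k\times\varphi_{ij})$ must then cancel against the final summand $\lambda J_\ast(\alpha)$: expanding $\alpha$ through (\ref{2.1}) and applying $\lambda J_\ast$ term by term reproduces exactly those cross-product corrections, together with the tangential contribution $\xi\bigl(\lambda J_\ast\varphi_u\,u'+\lambda J_\ast\varphi_v\,v'\bigr)=\xi\bigl(\overline{\varphi}_u u'+\overline{\varphi}_v v'\bigr)$ and the $\{u'v''-u''v'\}$ normal term. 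After cancellation, what survives is the barred version of (\ref{2.1}), and reading off $\overline{\varphi}_u u'+\overline{\varphi}_v v'=\overline{\vec{t}}$ together with the bracketed expression as $\overline{\kappa}\,\overline{\vec{b}}$ gives $\overline{\alpha}=\overline{\xi}\,\overline{\vec{t}}+\tfrac{\overline{\mu}}{\overline{\kappa}}\,\overline{\vec{b}}$ for suitable smooth functions, i.e. a rectifying curve on $\overline{\mathbf{M}}$.

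The hard part will be the bookkeeping in the regrouping step, and in particular the treatment of the normal component $\{u'v''-u''v'\}\overline{\mathbf{N}}$. The tangent vectors transform by the simple rule $\overline{\varphi}_i=\lambda J_\ast\varphi_i$, but the surface normal does not, since $J_\ast$ is only linear and does not commute with the cross product; one must argue separately that $\overline{\varphi}_u\times\overline{\varphi}_v=\lambda^2\,J_\ast\varphi_u\times J_\ast\varphi_v$ is proportional to $\overline{\mathbf{N}}$ with the correct scale factor $\lambda^2$. Ensuring that the six correction terms align exactly with the expansion of $\lambda J_\ast(\alpha)$ so that they cancel without residue, and that the normal term emerges with the right dilation weight, is where the computation is most delicate and error-prone.
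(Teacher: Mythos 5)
Your proposal is, in substance, the paper's proof of the general conformal theorem (the one establishing condition (\ref{j2.2})), not of this corollary: the add-and-subtract of $\lambda J_\ast\varphi_{ij}$ inside the cross products via (\ref{q3.7}), the six identities (\ref{j2.6})--(\ref{j2.7}), and the regrouping into the barred analogue of (\ref{2.1}) are exactly the steps the paper carries out there. The paper's proof of the corollary itself is a one-liner: an isometry is the $\lambda=1$ case, and substituting $\lambda=1$ into (\ref{j2.2}) yields the displayed condition. That substitution is the one step your write-up never performs. Since $\lambda\equiv 1$ forces $\lambda_u=\lambda_v=0$, each factor $\lambda_u J_\ast\varphi_j+\lambda\frac{\partial J_\ast}{\partial u}\varphi_j$ collapses to $\frac{\partial J_\ast}{\partial u}\varphi_j$ and the leading $\lambda$'s disappear, which is precisely why the corollary's hypothesis reads $\overline{\alpha}-J_\ast(\alpha)=\frac{\mu}{\kappa}\big[u'^3\big(J_\ast\varphi_u\times\frac{\partial J_\ast}{\partial u}\varphi_u\big)+\cdots\big]$ with no dilation factors. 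You should either cite the theorem and make this specialization explicit, or, if you insist on rerunning the computation, run it with $\lambda=1$ from the start so that the hypothesis you are manipulating is the one actually stated.

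Two further remarks. First, the delicate point you flag --- that $J_\ast$ need not commute with the cross product, so the identification of $\lambda^2 J_\ast{\bf N}$ (here just $J_\ast{\bf N}$) with $\overline{\bf N}$ requires an argument --- is real, and the paper does not resolve it either; it absorbs the discrepancy into the assumption $\overline{\xi}\approx\xi$ and $\overline{\mu}/\overline{\kappa}\approx\mu/\kappa$ at the end of the theorem's proof. Second, your claim that the six correction terms $-\lambda J_\ast(\varphi_k\times\varphi_{ij})$ ``cancel against'' $\lambda J_\ast(\alpha)$ is stated backwards: expanding $\lambda J_\ast(\alpha)$ via (\ref{2.1}) \emph{supplies} those cross-product terms (together with the tangential and normal pieces), and they then \emph{combine} with the corrections to complete each $\overline{\varphi}_k\times\overline{\varphi}_{ij}$; nothing is left over to cancel. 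The bookkeeping works out, but as written the regrouping step would not survive a careful reading.
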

\begin{proof}
A conformal transformation is the composition of a dilation function and an isometry. Substituting $\lambda=1$ in (\ref{j2.2}), we get the above expression.  
\end{proof}
\begin{theorem}
Let ${\bf M}$ and $\overline{\bf M}$ be two conformal smooth surfaces and $\alpha(s)$ be a rectifying curve on ${\bf M}$. Then for the component of $\alpha(s)$ along the surface normal, we have
\begin{equation}\label{q8}
\overline{\alpha} \cdot \overline{\bf N}= \lambda^4 [\alpha \cdot {\bf N}+ h(E,G,F,\lambda)],
\end{equation}
where 
\begin{equation}\label{q9}
h(E,G,F,\lambda)= \frac{\mu}{\kappa}W^2\Big[{u^\prime}^3\varepsilon_{11}^2-{v^\prime}^3 \varepsilon_{22}^1 +2 {u^\prime}^2 v^\prime \varepsilon_{12}^2 +u^\prime {v^\prime}^2 \varepsilon_{22}^2+({u^\prime}^2v^\prime +2u^\prime {v^\prime}^2)\varepsilon_{12}^1\Big].
\end{equation}
\end{theorem}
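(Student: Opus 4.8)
The plan is to read the normal component straight off the description (\ref{j2.2}) of $\overline{\alpha}$ furnished by the previous theorem, after first recording how the surface normal itself transforms. Since $\overline{\varphi}_u=\lambda J_\ast\varphi_u$ and $\overline{\varphi}_v=\lambda J_\ast\varphi_v$ by (\ref{2.2})--(\ref{2.3}), and since the isometric factor $J_\ast$ preserves the Euclidean inner product and (being orientation preserving) commutes with the cross product, $J_\ast a\times J_\ast b=J_\ast(a\times b)$, I would obtain
\begin{equation*}
\overline{\bf N}=\overline{\varphi}_u\times\overline{\varphi}_v=\lambda^2 J_\ast(\varphi_u\times\varphi_v)=\lambda^2 J_\ast{\bf N},\qquad \overline{W}=\lambda^2 W,
\end{equation*}
the last equality coming from $\overline{E}=\lambda^2E,\ \overline{F}=\lambda^2F,\ \overline{G}=\lambda^2G$. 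Here the normals are kept unnormalised, exactly as in (\ref{2.1}), so ${\bf N}\cdot{\bf N}=W^2$ and $\overline{\bf N}\cdot\overline{\bf N}=\lambda^4W^2$. This one identity is the source of the factor $\lambda^4$ in (\ref{q8}): each of the two copies of the normal occurring in $\overline{\alpha}\cdot\overline{\bf N}$ contributes a $\lambda^2$.

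Next I would project. Because $\overline{\varphi}_u,\overline{\varphi}_v$ span the tangent plane and are orthogonal to $\overline{\bf N}$, the tangential summand $\xi(\overline{\varphi}_uu'+\overline{\varphi}_vv')$ of $\overline{\alpha}$ contributes nothing, and only the binormal block survives. In that block each second derivative is resolved by the Gauss formula $\varphi_{jk}=\Gamma_{jk}^1\varphi_u+\Gamma_{jk}^2\varphi_v+(\text{second fundamental form})\,\widehat{\bf N}$, so that $\varphi_i\times\varphi_{jk}$ has normal part $\pm\Gamma_{jk}^{\,l}(\varphi_u\times\varphi_v)$ and, dotted with ${\bf N}=\varphi_u\times\varphi_v$, each cross product collapses to $\pm\Gamma_{jk}^{\,l}W^2$ (with a plus sign when $\varphi_u$ leads the product and a minus when $\varphi_v$ does). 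Carrying this out on ${\bf M}$ gives
\begin{equation*}
\alpha\cdot{\bf N}=\tfrac{\mu}{\kappa}W^2\big[\{u'v''-u''v'\}+u'^3\Gamma_{11}^2+2u'^2v'\Gamma_{12}^2+u'v'^2\Gamma_{22}^2-u'^2v'\Gamma_{11}^1-2u'v'^2\Gamma_{12}^1-v'^3\Gamma_{22}^1\big],
\end{equation*}
and the identical reduction on $\overline{\bf M}$, using $\overline{W}{}^2=\lambda^4W^2$, produces the same bracket with the barred Christoffel symbols $\overline{\Gamma}_{jk}^{\,l}$ and an overall prefactor $\lambda^4W^2$.

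Comparing the two expressions is the heart of the matter. The $\lambda^4W^2$ prefactor on $\overline{\bf M}$ yields the global $\lambda^4$, while the bracket differs from the one on ${\bf M}$ only through the replacement of $\Gamma_{jk}^{\,l}$ by $\overline{\Gamma}_{jk}^{\,l}$. I would therefore feed in the conformal change-of-connection law, which adds to each $\Gamma_{ij}^{\,k}$ a combination of $\lambda_u/\lambda$ and $\lambda_v/\lambda$ weighted by the coefficients $E,F,G$ (equivalently, one extracts these $\lambda_u,\lambda_v$ contributions hidden in the correction vectors $\lambda\frac{\partial J_\ast}{\partial u}\varphi_k$ appearing in (\ref{q3.7}) and (\ref{j2.2})). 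Writing $\overline{\Gamma}=\Gamma+(\overline{\Gamma}-\Gamma)$ then splits $\overline{\alpha}\cdot\overline{\bf N}$ into a piece that is exactly $\lambda^4(\alpha\cdot{\bf N})$ and a remainder $\lambda^4\,\tfrac{\mu}{\kappa}W^2[\dots]$ built solely from the connection differences $\overline{\Gamma}-\Gamma$; this remainder is the function $h(E,G,F,\lambda)$ of (\ref{q9}), the coefficients $\varepsilon_{ij}^{\,k}$ there encoding precisely these $\lambda$-derivative terms. In particular $\overline{\Gamma}_{ij}^{\,k}=\Gamma_{ij}^{\,k}$ whenever $\lambda$ is constant, so $h$ vanishes for a homothety, giving the homothetic invariance of the normal component advertised in the abstract.

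The step I expect to be the main obstacle is exactly this last matching: the connection differences must be organised so that the six cross-product contributions collapse to the precise combination $u'^3\varepsilon_{11}^2-v'^3\varepsilon_{22}^1+2u'^2v'\varepsilon_{12}^2+u'v'^2\varepsilon_{22}^2+(u'^2v'+2u'v'^2)\varepsilon_{12}^1$ recorded in (\ref{q9}), with every factor of $\lambda,\lambda_u,\lambda_v$ and $W$ attached to the correct coefficient and no stray lower power of $\lambda$ left behind. Keeping the unnormalised normal ${\bf N}=\varphi_u\times\varphi_v$ consistent on both surfaces throughout the reduction is what makes the whole bookkeeping close cleanly to the single factor $\lambda^4$.
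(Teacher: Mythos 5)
Your proposal takes essentially the same route as the paper: expand the normal component of the rectifying decomposition (\ref{2.1}), collapse each $(\varphi_i\times\varphi_{jk})\cdot{\bf N}$ to $\pm\Gamma_{jk}^{l}W^2$, and then combine $\overline{W}^2=\lambda^4W^2$ with the conformal change $\overline{\Gamma}_{ij}^k=\Gamma_{ij}^k+\varepsilon_{ij}^k$ of (\ref{q13}); the only difference is that you reach the Christoffel symbols through the Gauss formula, whereas the paper uses the Lagrange identity $(a\times b)\cdot(c\times d)=(a\cdot c)(b\cdot d)-(a\cdot d)(b\cdot c)$ together with the expressions (\ref{a2.9})--(\ref{a2.10}) for $\varphi_{jk}\cdot\varphi_i$. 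One point needs attention, though: the bracket your reduction produces, carrying $-{u'}^2v'\,\Gamma_{11}^1-2u'{v'}^2\,\Gamma_{12}^1$, is not literally the bracket of (\ref{q15}) and (\ref{q9}), which instead carry $+({u'}^2v'+2u'{v'}^2)\Gamma_{12}^1$. Since $(\varphi_v\times\varphi_{uu})\cdot{\bf N}=F(\varphi_{uu}\cdot\varphi_v)-G(\varphi_{uu}\cdot\varphi_u)=-\Gamma_{11}^1W^2$ and $(\varphi_v\times\varphi_{uv})\cdot{\bf N}=-\Gamma_{12}^1W^2$, and since your combination is exactly the one appearing in the Liouville-type formula (\ref{w23}) the paper itself uses for $\kappa_g$, your coefficients are the internally consistent ones; but as written your argument establishes (\ref{q8}) with a correspondingly adjusted $h$ (with $-{u'}^2v'\,\varepsilon_{11}^1-2u'{v'}^2\,\varepsilon_{12}^1$ in place of the stated $\varepsilon_{12}^1$ terms), so you cannot simply assert, as you do in the final step, that your remainder ``is the function $h$ of (\ref{q9})''---you should either record the corrected $h$ explicitly or reconcile the two offending cross-product terms.
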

\begin{proof}
Let $\overline{\bf M}$ be the conformal image of ${\bf M}$ and $\varphi(u,v)$ and $\overline{\varphi}(u,v)=J\circ \varphi(u,v) $ be the surface patches of ${\bf M}$ and $\overline{{\bf M}},$ respectively. We know that 
\begin{equation}\label{z2.8}
\lambda^2E=\overline{E},\quad  \lambda^2F=\overline{F},\quad  \lambda^2G=\overline{G}.
\end{equation}
This implies that
\begin{eqnarray}\label{12a}
\left\{
\begin{array}{ll}
\overline{E}_u=2\lambda \lambda_u E + \lambda^2 E_v, \quad \overline{E}_v=2\lambda \lambda_v E + \lambda^2 E_v,\\
\overline{F}_u=2\lambda \lambda_u F + \lambda^2 F_v, \quad \overline{F}_v=2\lambda \lambda_v F + \lambda^2 F_v,\\
\overline{G}_u=2\lambda \lambda_u G + \lambda^2 G_v, \quad \overline{G}_v=2\lambda \lambda_v G + \lambda^2 G_v.
\end{array}
\right.
\end{eqnarray}
Now, we have
$$E_u=(\varphi_u \cdot \varphi_u)_u=2\varphi_{uu}\cdot \varphi_u$$
\begin{equation}\label{a2.9}
\varphi_{uu}\cdot \varphi_u=\frac{E_u}{2}.
\end{equation}
Similarly, it is easy to show that
\begin{eqnarray}\label{a2.10}
\begin{array}{ll}
\varphi_{uu}\cdot \varphi_v=F_u-\frac{E_v}{2}, \quad \varphi_{uv}\cdot \varphi_u=\frac{E_v}{2}, \quad \varphi_{uv}\cdot \varphi_v=\frac{G_u}{2},\\
\varphi_{vv}\cdot \varphi_v=\frac{G_v}{2}, \quad \varphi_{vv}\cdot \varphi_u=F_v-\frac{G_u}{2}
\end{array}
\end{eqnarray}
In addition, let $\Gamma_{ij}^k$ be the Christoffel symbols of second kind given by
\begin{equation}\label{q12}\left\{
\begin{array}{ll}
\Gamma_{11}^1=\frac{1}{2W^2}\left\{GE_u+F[E_v-2F_u]\right\}, \quad \Gamma_{22}^2=\frac{1}{2W^2}\left\{EG_v+F[G_v-2F_v]\right\}\\
\Gamma_{11}^2=\frac{1}{2W^2}\left\{E[2F_u-E_v]-FE_v\right\},\quad \Gamma_{22}^1=\frac{1}{2W^2}\left\{G[2F_v-G_u]-FG_v\right\}\\
\Gamma_{12}^2=\frac{1}{2W^2}\left\{EG_u-FE_v\right\}=\Gamma_{21}^2,\quad
\Gamma_{21}^1=\frac{1}{2W^2}\left\{GE_v-FG_u\right\}=\Gamma_{12}^1
\end{array}
\right.
\end{equation}
where $W=\sqrt{EG-F^2}$.
After conformal motion, the Christoffel symbols turns out to be
\begin{eqnarray}\label{q13}
\begin{array}{ll}
\overline{\Gamma}_{11}^1=\Gamma_{11}^1 +\varepsilon_{11}^1,\quad \overline{\Gamma}_{11}^2=\Gamma_{11}^2 +\varepsilon_{11}^2, \quad \overline{\Gamma}_{12}^1=\Gamma_{12}^1 +\varepsilon_{12}^1,\\
\overline{\Gamma}_{12}^2=\Gamma_{12}^2 +\varepsilon_{12}^2,\quad \overline{\Gamma}_{22}^1=\Gamma_{22}^1 +\varepsilon_{22}^1, \quad \overline{\Gamma}_{22}^2=\Gamma_{22}^2 +\varepsilon_{22}^2,
\end{array}
\end{eqnarray}
where
\begin{eqnarray}\label{q16}
\left\{\begin{array}{ll}
\varepsilon_{11}^1=
\frac{EG\lambda_u -2F^2\lambda_u +FE\lambda_v}{\lambda W^2},\quad \varepsilon_{11}^2=
\frac{EF\lambda_u -E^2 \lambda_v}{\lambda W^2},\vspace{.1cm}\\
\varepsilon_{12}^1=\frac{EG\lambda_v-FG\lambda_u}{\lambda W^2},
\quad \varepsilon_{12}^2=\frac{EG\lambda_u - FE \lambda_v}{\lambda W^2},\vspace{.1cm}\\
\varepsilon_{22}^1=\frac{GF\lambda_v - G^2 \lambda_u}{\lambda W^2},\quad \varepsilon_{22}^2=\frac{EG\lambda_v -2F^2 \lambda_v +FG\lambda_u}{\lambda W^2}.
\end{array}\right.
\end{eqnarray}
Now to find the position vector of the curve $\alpha$ along the normal ${\bf N}$ to the surface ${\bf M}$ at a point $\alpha(s)$, we have
\begin{eqnarray*}
\nonumber
\alpha(s)\cdot {\bf N} &=&\frac{\mu(s)}{k(s)}\Big[ (EG-F^2)(u^{\prime}v^{\prime \prime}-u^{\prime \prime}v)+(\varphi_u\times \varphi_{uu})\cdot{\bf N}{u^{\prime}}^3+2(\varphi_u\times \varphi_{uv})\cdot {\bf N}{u^{\prime}}^2v^{\prime}\\
\nonumber
&&+(\varphi_u\times \varphi_{vv})\cdot{\bf N}u^{\prime}{v^{\prime}}^2+(\varphi_v\times \varphi_{uu})\cdot{\bf N}{u^{\prime}}^2v^{\prime}+2(\varphi_v\times \varphi_{uv})\cdot{\bf N}u^{\prime}{v^{\prime}}^2\\
&&+(\varphi_v\times \varphi_{vv})\cdot{\bf N}{v^{\prime}}^3\Big],\\
&=&\frac{\mu(s)}{k(s)}\Big[(u^{\prime}v^{\prime \prime}-u^{\prime \prime}v)(EG-F^2)+{u^{\prime}}^3\{E(\varphi_{uu}\cdot\varphi_v)-F(\varphi_{uu}\cdot\varphi_u)\}\\
&&+2{u^{\prime}}^2v^{\prime}\{E(\varphi_{uv}\cdot\varphi_v)-F(\varphi_{uv}\cdot\varphi_u)\}+u^{\prime}{v^{\prime}}^2\{E(\varphi_{vv}\cdot\varphi_v)-F(\varphi_{vv}\cdot\varphi_u)\}\\
&&+{u^{\prime}}^2v^{\prime}\{F(\varphi_{uu}\cdot\varphi_v)-G(\varphi_{uu}\cdot\varphi_u)\}+2u^{\prime}{v^{\prime}}^2\{F(\varphi_{uv}\cdot\varphi_v)-G(\varphi_{uv}\cdot\varphi_u)\}\\
&&+{v^{\prime}}^3\{F(\varphi_{vv}\cdot\varphi_v)-G(\varphi_{vv}\cdot\varphi_u)\}.
\end{eqnarray*}
Using (\ref{a2.9}) and (\ref{a2.10}) in the above equation, we get
\begin{eqnarray*}
\alpha \cdot {\bf N} &=&\frac{\mu}{\kappa}\Big[(EG-F^2)(u^\prime v^{\prime\prime}-v^\prime u^{\prime\prime})
+\left \{E\left(F_u-\frac{E_v}{2}\right)-\frac{FE_u}{2}\right \}{u^\prime}^3\\
&&+2 \left \{\frac{EG_u}{2}-\frac{FE_v}{2}\right \}{u^\prime}^2 v^\prime
+\left \{\frac{EG_v}{2}-F\left(F_v-\frac{G_u}{2}\right) \right\}{u^\prime}{v^\prime}^2\\
&&+ \left \{\frac{FG_u}{2}-\frac{GE_u}{2}\right \}{u^\prime}^2v^\prime+2 \left \{\frac{FG_u}{2} - \frac{GE_v}{2} \right\}{u^\prime}{v^\prime}^2\\
&&+ \left \{\frac{FG_v}{2}-G\left(F_v-\frac{G_u}{2}\right)\right \}{v^\prime}^3\Big].
\end{eqnarray*}
Taking in consideration (\ref{q12}), the above equation can be written as
\begin{equation}\label{q15}
\alpha \cdot {\bf N}=\frac{\mu}{\kappa}(EG-F^2)\Big[(u^\prime v^{\prime \prime}-v^\prime u^{\prime \prime})+{u^\prime}^3\Gamma_{11}^2-{v^\prime}^3 \Gamma_{22}^1 +2{u^\prime}^2v^\prime \Gamma_{12}^2 +u^\prime {v^\prime}^2 \Gamma_{22}^2+({u^\prime}^2v^\prime+2u^\prime {v^\prime}^2)\Gamma_{12}^1\Big].
\end{equation}
In view of (\ref{z2.8}), (\ref{q13}) and (\ref{q15}), we get
\begin{equation}
\overline{\alpha} \cdot \overline{\bf N}= \lambda^4 \alpha \cdot {\bf N}+ \frac{\mu}{\kappa} \lambda^4 W^2\Big[{u^\prime}^3\varepsilon_{11}^2-{v^\prime}^3 \varepsilon_{22}^1 +2 {u^\prime}^2 v^\prime \varepsilon_{12}^2 +u^\prime {v^\prime}^2 \varepsilon_{22}^2+({u^\prime}^2v^\prime +2u^\prime {v^\prime}^2)\varepsilon_{12}^1   \Big].
\end{equation}
This proves the claim.
\end{proof}
\begin{corollary}
Let ${\bf M}$ and $\overline{\bf M}$ be homothetic smooth surfaces and $\alpha(s)$ be a rectifying curve on ${\bf M}$. Then the components of $\alpha(s)$ along the surface normal are also homothetic.
\end{corollary}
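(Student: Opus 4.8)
The plan is to deduce this corollary directly from the preceding theorem by specializing the conformal transformation law (\ref{q8}) to the homothetic case. The single structural fact that drives everything is that for a homothetic map the dilation factor is a nonzero constant, $\lambda = c$, so that $\lambda_u = \lambda_v = 0$ identically on ${\bf M}$. Thus my first step would be to record this vanishing of the derivatives of $\lambda$, since it is precisely what distinguishes the homothetic subcase from the general conformal one.

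Next I would inspect the explicit expressions (\ref{q16}) for the six correction terms $\varepsilon_{ij}^k$. Each of them is a linear combination of $\lambda_u$ and $\lambda_v$ with coefficients assembled from $E$, $F$, $G$ and divided by $\lambda W^2$; in other words every numerator is homogeneous of degree one in the pair $(\lambda_u,\lambda_v)$. Feeding in $\lambda_u = \lambda_v = 0$ therefore forces $\varepsilon_{11}^1 = \varepsilon_{11}^2 = \varepsilon_{12}^1 = \varepsilon_{12}^2 = \varepsilon_{22}^1 = \varepsilon_{22}^2 = 0$. Substituting this into the definition (\ref{q9}) of $h(E,G,F,\lambda)$, which is a linear combination of exactly these $\varepsilon_{ij}^k$ (with coefficients $\tfrac{\mu}{\kappa}W^2$ times monomials in $u'$ and $v'$), I immediately obtain $h(E,G,F,\lambda) = 0$ in the homothetic case.

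Finally I would return $h = 0$ and $\lambda = c$ to the transformation law (\ref{q8}), which then collapses to $\overline{\alpha}\cdot\overline{\bf N} = c^4\,(\alpha\cdot{\bf N})$. Hence the normal component of the rectifying curve transforms merely by multiplication by the constant factor $c^4$, which is exactly the assertion that it is homothetic invariant in the sense of Definition \ref{def1}. I do not anticipate any genuine obstacle here: the whole proof reduces to verifying the vanishing of the $\varepsilon_{ij}^k$, and the only point that deserves care is to confirm that a constant $\lambda$ differentiates to zero, so that no residual derivative terms survive in the correction $h$ and the dependence of $\overline{\alpha}\cdot\overline{\bf N}$ on $\alpha\cdot{\bf N}$ is purely through a constant scaling.
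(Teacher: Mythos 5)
Your proposal is correct and is exactly the argument the paper intends: the paper's own proof simply cites (\ref{q8}), (\ref{q9}) and (\ref{q16}) with $\lambda=c$ and calls the claim straightforward, and you have filled in the only step involved, namely that $\lambda_u=\lambda_v=0$ kills every $\varepsilon_{ij}^k$, hence $h=0$ and $\overline{\alpha}\cdot\overline{\bf N}=c^4\,\alpha\cdot{\bf N}$.
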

\begin{proof}
Let $J: {\bf M} \rightarrow \overline{\bf M}$ be a homothetic map with $\lambda(u,v)=c$, where $c$ is a non-zero, non-unit constant. Then, in view of (\ref{q8}), (\ref{q9}) and (\ref{q16}), the claim is straightforward.
\end{proof}
\begin{corollary}\cite{8}
Let ${\bf M}$ and $\overline{\bf M}$ be isometric smooth surfaces and $\alpha(s)$ be a rectifying curve on ${\bf M}$. Then the component of $\alpha(s)$ along the surface normal is invariant under such isometry. 
\end{corollary}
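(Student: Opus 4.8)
The plan is to obtain this corollary as a direct specialization of the preceding Theorem, whose master formula (\ref{q8})--(\ref{q9}) already expresses the normal component $\overline{\alpha}\cdot\overline{\bf N}$ on $\overline{\bf M}$ in terms of $\alpha\cdot{\bf N}$ on ${\bf M}$ together with a correction term $h(E,G,F,\lambda)$. Since an isometry is precisely a conformal map whose dilation factor equals the identity (as recorded in the Definition of the conformal map, where $\lambda=1$ recovers isometry), the strategy is simply to substitute $\lambda=1$ into that formula and read off the conclusion.

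First I would note that for an isometry the scale factor $\lambda$ is the constant function $1$, so both partial derivatives vanish: $\lambda_u=\lambda_v=0$. Inspecting the explicit expressions (\ref{q16}) for the discrepancy terms $\varepsilon_{ij}^k$, each of them carries a factor of $\lambda_u$ or $\lambda_v$ in its numerator; hence every $\varepsilon_{ij}^k$ vanishes identically. Feeding this into (\ref{q9}) makes $h(E,G,F,\lambda)=0$, and simultaneously $\lambda^4=1$. Substituting both facts into (\ref{q8}) collapses it to
\begin{equation*}
\overline{\alpha}\cdot\overline{\bf N}=\alpha\cdot{\bf N},
\end{equation*}
which is exactly the assertion that the normal component of a rectifying curve is invariant under the isometry.

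There is no serious obstacle here, since the computational heart of the matter was already carried out in the Theorem; the only point requiring a moment's care is the justification that the $\varepsilon_{ij}^k$ are genuinely proportional to first derivatives of $\lambda$, so that it is the constancy of $\lambda$ (rather than the specific value $\lambda=1$) that forces them to zero. This observation also clarifies the slightly weaker conclusion of the homothetic corollary, where $\lambda=c$ is constant but not unity: there too the discrepancy terms vanish and $h=0$, yet the surviving prefactor $\lambda^4=c^4$ yields proportionality rather than equality, so the normal component is homothetic rather than strictly invariant.
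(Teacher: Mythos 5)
Your proposal is correct and follows essentially the same route as the paper: specialize the conformal formula (\ref{q8})--(\ref{q9}) to $\lambda=1$, observe from (\ref{q16}) that every $\varepsilon_{ij}^k$ vanishes so that $h\equiv 0$, and conclude $\overline{\alpha}\cdot\overline{\bf N}=\alpha\cdot{\bf N}$. Your added observation that it is the constancy of $\lambda$ (via $\lambda_u=\lambda_v=0$) rather than the value $\lambda=1$ that kills the $\varepsilon_{ij}^k$ is a nice clarification, but the argument is the paper's own.
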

\begin{proof}
Let $J: {\bf M} \rightarrow \overline{\bf M}$ be an isometry. Then the dilation factor of conformality is $\lambda=1$. Therefore, from (\ref{q8}), we have
\begin{equation}
\overline{\alpha} \cdot \overline{\bf N}=[\alpha \cdot {\bf N}+ h(E,G,F,\lambda)],
\end{equation}
where $h$ is given by (\ref{q9}). From (\ref{q16}), it is straightforward to check $h\equiv 0,$ which proves our claim. 
\end{proof}
\begin{corollary}\label{crr} The Christoffel symbols are invariant under isometry. 
\end{corollary}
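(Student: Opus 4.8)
The plan is to observe that the Christoffel symbols displayed in (\ref{q12}) are built entirely from the coefficients $E$, $F$, $G$ of the first fundamental form and their first partial derivatives, together with $W = \sqrt{EG - F^2}$. Since an isometry is precisely a conformal map with dilation factor $\lambda = 1$, the coefficients of the first fundamental form are preserved; that is, $E = \overline{E}$, $F = \overline{F}$, $G = \overline{G}$, and hence $\overline{W} = W$ along with $\overline{E}_u = E_u$, $\overline{E}_v = E_v$, and likewise for all first derivatives of $F$ and $G$.

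From here the cleanest route is to invoke the correction terms already computed in (\ref{q16}). Each $\varepsilon_{ij}^k$ carries an explicit factor of either $\lambda_u$ or $\lambda_v$ in its numerator. Under an isometry $\lambda$ is the constant function $1$, so $\lambda_u = \lambda_v = 0$, whence every $\varepsilon_{ij}^k$ vanishes identically. Substituting this into the transformation law (\ref{q13}) gives $\overline{\Gamma}_{ij}^k = \Gamma_{ij}^k$ for each admissible triple of indices, which is exactly the asserted invariance.

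There is essentially no obstacle to this argument; it is a direct specialization of the conformal computation to the isometric case. The only point meriting a word of care is confirming that substituting $E = \overline{E}$, $F = \overline{F}$, $G = \overline{G}$ into (\ref{q12}) and its barred analogue produces identical expressions, which holds because the formula for $\Gamma_{ij}^k$ is the same algebraic function of the fundamental coefficients on both surfaces. Thus the direct substitution into (\ref{q12}) and the vanishing of the $\varepsilon_{ij}^k$ agree, and either suffices to close the proof.
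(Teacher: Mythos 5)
Your argument is correct and follows essentially the same route as the paper: set $\lambda=1$, observe from (\ref{q16}) that every $\varepsilon_{ij}^k$ vanishes because $\lambda_u=\lambda_v=0$, and conclude $\overline{\Gamma}_{ij}^k=\Gamma_{ij}^k$ via (\ref{q13}). The extra remark that the $\Gamma_{ij}^k$ in (\ref{q12}) depend only on $E,F,G$ and their first derivatives is a harmless (and classical) reinforcement of the same conclusion.
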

\begin{proof}
Let $J: {\bf M} \rightarrow \overline{\bf M}$ be an isometry. Then the dilation factor of conformality is $\lambda=1$. From (\ref{q13}) and (\ref{q16}), it is straightforward to check $\overline{\Gamma}_{ij}^k=\Gamma_{ij}^k,(i,j,k=1,2).$ In other words any quantity depending only on Christoffel symbols is invariant under isometry. 
\end{proof}
\begin{theorem}
Let ${\bf M}$ and $\overline{\bf M}$ be two conformal smooth surfaces and $\alpha(s)$ be a rectifying curve on ${\bf M}$. Then for the component of $\alpha(s)$ along any tangent vector to the surface, we have
\begin{equation}
\overline{\alpha}\cdot\overline{{\bf T}}-\alpha\cdot{\bf T}=\xi(s)(\lambda^2-1)(aEu'+aFv'+bFu'+bGv')+\frac{\mu}{\kappa}(\overline{\kappa}_n-\kappa_n),
\end{equation}
where ${\bf T}=a\varphi_u+b\varphi_v$ is any tangent vector to the surface $\bf M$ at $\alpha(s)$.
\end{theorem}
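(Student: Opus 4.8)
The plan is to expand $\alpha\cdot{\bf T}$ through the Frenet decomposition of the rectifying curve and the coordinate decomposition of the test vector, and to treat the two resulting pieces separately. Writing $\alpha=\xi\vec{t}+\mu\vec{b}$ from (\ref{1}) and ${\bf T}=a\varphi_u+b\varphi_v$, we obtain $\alpha\cdot{\bf T}=\xi(\vec{t}\cdot{\bf T})+\mu(\vec{b}\cdot{\bf T})$. The whole identity then reduces to evaluating a tangential scalar product and a binormal scalar product, repeating the evaluation on $\overline{\bf M}$, and subtracting; the approximations $\overline{\xi}\approx\xi$ and $\overline{\mu}/\overline{\kappa}\approx\mu/\kappa$ established in Theorem 3.1 will let the two normalizing factors be identified across the two surfaces.

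The tangential piece is routine. Using $\vec{t}=\varphi_uu'+\varphi_vv'$ from (\ref{2}) together with $E=\varphi_u\cdot\varphi_u$, $F=\varphi_u\cdot\varphi_v$, $G=\varphi_v\cdot\varphi_v$, a direct expansion gives $\vec{t}\cdot{\bf T}=aEu'+aFv'+bFu'+bGv'$. On $\overline{\bf M}$ the identical computation with $\overline{\bf T}=a\overline{\varphi}_u+b\overline{\varphi}_v$ and the conformal relations $\overline{E}=\lambda^2E$, $\overline{F}=\lambda^2F$, $\overline{G}=\lambda^2G$ of (\ref{z2.8}) returns the same expression scaled by $\lambda^2$. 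Since $\overline{\xi}\approx\xi$, the tangential contribution to $\overline{\alpha}\cdot\overline{\bf T}-\alpha\cdot{\bf T}$ is precisely $\xi(\lambda^2-1)(aEu'+aFv'+bFu'+bGv')$, the first term on the right.

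The substantive piece is $\mu(\vec{b}\cdot{\bf T})$. Here I would insert the expression (\ref{3}) for $\vec{b}$ and dot it against ${\bf T}=a\varphi_u+b\varphi_v$. The ${\bf N}$-term drops at once because ${\bf N}\cdot{\bf T}=0$ for any tangent vector, and every remaining summand is a scalar triple product of the form $(\varphi_i\times\varphi_{jk})\cdot\varphi_\ell$; those containing a repeated first-derivative factor vanish, while the survivors reduce through $(\varphi_u\times X)\cdot\varphi_v=-W({\bf N}\cdot X)$ and $(\varphi_v\times X)\cdot\varphi_u=W({\bf N}\cdot X)$ to the second fundamental form coefficients $L=\varphi_{uu}\cdot{\bf N}$, $M=\varphi_{uv}\cdot{\bf N}$, $N=\varphi_{vv}\cdot{\bf N}$. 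Collecting these and comparing with (\ref{se1}) exhibits the normal curvature $\kappa_n=u'^2L+2u'v'M+v'^2N$, so that $\mu(\vec{b}\cdot{\bf T})$ appears as a multiple of $\frac{\mu}{\kappa}\kappa_n$. Carrying out the same reduction on $\overline{\bf M}$ produces the corresponding multiple of $\frac{\overline{\mu}}{\overline{\kappa}}\overline{\kappa}_n$, and invoking $\frac{\overline{\mu}}{\overline{\kappa}}\approx\frac{\mu}{\kappa}$ gives the binormal contribution $\frac{\mu}{\kappa}(\overline{\kappa}_n-\kappa_n)$ to the difference. Adding the two contributions yields the stated identity.

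The main obstacle is precisely the bookkeeping in this binormal step. Beyond tracking the six triple products and their signs, the delicate point is the behaviour of the geometric weights under the conformal map: the area element satisfies $\overline{W}=\lambda^2W$, while the binormal and the parameter derivatives $u',v'$ are affected by the rescaling of arc length, and a careful accounting shows that the binormal product naturally carries a geometric factor of the form $W(av'-bu')$ alongside $\kappa_n$. Ensuring that these $\lambda$- and $W$-dependent weights cancel or are absorbed, so that the binormal term collapses cleanly to $\frac{\mu}{\kappa}(\overline{\kappa}_n-\kappa_n)$ rather than leaving stray factors, is where the argument must be handled with the greatest care; it is here that the normalizing approximations $\overline{\xi}\approx\xi$ and $\frac{\overline{\mu}}{\overline{\kappa}}\approx\frac{\mu}{\kappa}$ from Theorem 3.1 do the essential work.
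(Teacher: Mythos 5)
Your proposal follows essentially the same route as the paper: expand $\alpha$ in the coordinate frame, dot with $\varphi_u$ and $\varphi_v$ so that the tangential part yields the first-fundamental-form combination while the triple products collapse to $\kappa_n$, then form $a(\cdot)+b(\cdot)$ and subtract the barred and unbarred versions. The bookkeeping worry you flag is real rather than a defect of your argument: the binormal term genuinely carries the weight $W(av'-bu')$ (with $\overline{W}=\lambda^{2}W$ on the image surface), and the paper's own proof silently discards it by writing $\alpha\cdot\varphi_u=\xi(Eu'+Fv')+\frac{\mu}{\kappa}v'\kappa_n$ without the factor $W$ and then dropping the residual $(av'+bu')$ in the final linear combination, so your account is, if anything, more careful than the source.
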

\begin{proof}
From $(\ref{2.1})$, we see that
\begin{equation*}
\alpha\cdot\varphi_u=\xi(s)(Eu'+Fv')+\frac{\mu}{\kappa}(u'^2v'L+2u'v'^2M+v'^3N).
\end{equation*}
Since ${\bf M}$ and $\overline{\bf M}$ are conformal smooth surfaces, we have
\begin{equation*}
\overline{\alpha}\cdot\overline{\varphi}_u=\xi(s)\lambda^2(Eu'+Fv')+\frac{\mu}{\kappa}v'\overline{\kappa}_n.
\end{equation*}
Therefore
\begin{equation}\label{T1}
\overline{\alpha}\cdot\overline{\varphi}_u-\alpha\cdot\varphi_u=\xi(s)(\lambda^2-1)(Eu'+Fv')+\frac{\mu}{\kappa}v'(\overline{\kappa}_n-\kappa_n).
\end{equation}
Similarly we take the component of $\alpha$ along $\varphi_v$ and obtain the following relation
\begin{equation}\label{T2}
\overline{\alpha}\cdot\overline{\varphi}_v-\alpha\cdot\varphi_v=\xi(s)(\lambda^2-1)(Fu'+Gv')+\frac{\mu}{\kappa}u'(\overline{\kappa}_n-\kappa_n).
\end{equation}
Now with the help of $(\ref{T1})$ and $(\ref{T2})$ we get
\begin{eqnarray}
\nonumber
\overline{\alpha}\cdot\overline{{\bf T}}-\alpha\cdot{\bf T}&=&a(\overline{\alpha}\cdot\overline{\varphi}_u-\alpha\cdot\varphi_u)+b(\overline{\alpha}\cdot\overline{\varphi}_v-\alpha\cdot\varphi_v)\\
\nonumber
&=&\xi(s)(\lambda^2-1)(aEu'+aFv'+bFu'+bGv')+\frac{\mu}{\kappa}(\overline{\kappa}_n-\kappa_n).
\end{eqnarray}
\end{proof}

\begin{theorem}
Let $J$ be a conformal map between two smooth surfaces ${\bf M}$ and $\overline{\bf M}$ and let $\alpha(s)$ be a rectifying curve on ${\bf M}$ such that $\overline{\alpha}(s)=J\circ \alpha(s)$ be a rectifying curve on $\overline{\bf M}$. Then the normal curvature can not be conformaly invariant and the deviation is given by
\begin{equation*}
\overline{\kappa}_n-\kappa_n=\frac{1}{\lambda^4 W^2}\Big[{u^\prime}^2(\overline{\varphi}_{uu}-\lambda^6 \varphi_{uu})+2u^\prime v^\prime (\overline{\varphi}_{uv}-\lambda^6 \varphi_{uv})+{v^\prime}^2(\overline{\varphi}_{vv}-\lambda^6 \varphi_{vv})\Big].
\end{equation*}.
\end{theorem}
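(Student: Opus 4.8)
The plan is to compute $\kappa_n$ and $\overline{\kappa}_n$ from one and the same geometric formula and then subtract. From $(\ref{se1})$ we have $\kappa_n={u'}^2L+2u'v'M+{v'}^2N$, and the second fundamental form coefficients may be written intrinsically as $L=\varphi_{uu}\cdot{\bf N}$, $M=\varphi_{uv}\cdot{\bf N}$, $N=\varphi_{vv}\cdot{\bf N}$. Hence $\kappa_n=({u'}^2\varphi_{uu}+2u'v'\varphi_{uv}+{v'}^2\varphi_{vv})\cdot{\bf N}$, and the same reading gives $\overline{\kappa}_n=({u'}^2\overline{\varphi}_{uu}+2u'v'\overline{\varphi}_{uv}+{v'}^2\overline{\varphi}_{vv})\cdot\overline{\bf N}$ on $\overline{\bf M}$; this is also the sense in which the bracketed vector combination in the statement is to be read, namely through its component along the surface normal. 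First I would record how the unit normal transforms: from $(\ref{2.2})$ and $(\ref{2.3})$, together with the orientation- and inner-product-preserving character of $J_\ast$, one obtains $\overline{\varphi}_u\times\overline{\varphi}_v=\lambda^2 J_\ast(\varphi_u\times\varphi_v)$, whence $\overline{W}=\lambda^2 W$ by $(\ref{z2.8})$ and $\overline{\bf N}=J_\ast{\bf N}$.

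The second step is to substitute the second-order transformation laws $(\ref{q3.7})$ for $\overline{\varphi}_{uu}$, $\overline{\varphi}_{uv}$, $\overline{\varphi}_{vv}$ into $\overline{\kappa}_n$ and pair with $\overline{\bf N}=J_\ast{\bf N}$, using $(J_\ast X)\cdot(J_\ast Y)=X\cdot Y$. The purely tangential pieces $\lambda_u J_\ast\varphi_u$, $\lambda_u J_\ast\varphi_v$, and their companions are annihilated by $\overline{\bf N}$, while each factor $\lambda J_\ast\varphi_{uu}$ returns $\lambda L$, and similarly for $M$ and $N$. Thus $\overline{\kappa}_n$ splits as a scalar multiple of $\kappa_n$ plus a remainder assembled from the mixed terms $\lambda(\partial J_\ast/\partial u)\varphi_u$, etc. Subtracting $\kappa_n$ and repackaging the remainder as the combination $\overline{\varphi}_{uu}-\lambda^6\varphi_{uu}$ and its partners is where the prefactor should emerge: the denominator $\lambda^4 W^2=\overline{W}^2$ enters through the normalization of $\overline{\bf N}$, and the weight $\lambda^6$ records the compounded action of the dilation on the second derivatives and on the normal.

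The main obstacle is the treatment of the derivative-of-$J_\ast$ terms $(\partial J_\ast/\partial u)\varphi_u$ and $(\partial J_\ast/\partial v)\varphi_v$: these measure how the isometric (orthogonal) part of the conformal map rotates from point to point along the surface, and they are exactly the terms that do not cancel, so their non-vanishing is what forbids conformal invariance of $\kappa_n$. A clean device for controlling them is to differentiate the orthogonality relation $J_\ast^{T}J_\ast=I$, which shows $J_\ast^{T}(\partial J_\ast/\partial u)$ is skew-symmetric; consequently each mixed term contributes a genuine tangent-normal coupling that cannot be absorbed into a multiple of $\kappa_n$, and this is what gives the deviation its displayed form rather than an identity. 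The remaining work is purely the bookkeeping of the powers of $\lambda$ and of $W$ needed to collect the residual exactly into the stated bracket over $\lambda^4 W^2$; it is routine, but one must be careful not to conflate the $\lambda_u,\lambda_v$ contributions with the second-fundamental-form contributions while collecting.
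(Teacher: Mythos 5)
Your route does not reach the stated formula, and the gap is concentrated exactly in the step you dismiss as ``routine bookkeeping.'' The paper's proof never pairs the second derivatives with the normal at all: it takes the Monge-patch scalar identities (\ref{q20}), $L=\varphi_{uu}/W^2$, $M=\varphi_{uv}/W^2$, $N=\varphi_{vv}/W^2$, applies the same identities on $\overline{\bf M}$ with $\overline{W}^2=\lambda^4W^2$, and the displayed deviation then drops out of the single-line algebra $\overline{L}-\lambda^2L=\bigl(\overline{\varphi}_{uu}-\lambda^6\varphi_{uu}\bigr)/(\lambda^4W^2)$; the prefactor $1/(\lambda^4W^2)$ and the weight $\lambda^6$ are artifacts of precisely that identity and of measuring the deviation against $\lambda^2\kappa_n$ (note that the paper's own derivation (\ref{q21}) computes $\overline{\kappa}_n-\lambda^2\kappa_n$, not the $\overline{\kappa}_n-\kappa_n$ appearing in the theorem statement). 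Your computation instead starts from $L=\varphi_{uu}\cdot{\bf N}$ and $\overline{\bf N}=J_\ast{\bf N}$, substitutes (\ref{q3.7}), and kills the tangential pieces; that correctly yields
\begin{equation*}
\overline{L}=\lambda L+\lambda\Bigl(\tfrac{\partial J_\ast}{\partial u}\varphi_u\Bigr)\cdot J_\ast{\bf N},
\end{equation*}
and hence $\overline{\kappa}_n=\lambda\kappa_n+(\text{residual})$. This is a structurally different identity: it is linear in $\lambda$, carries no factor $1/(\lambda^4W^2)$, and no power $\lambda^6$ can appear, because $\varphi_{uu}\cdot{\bf N}$ contains no $W^2$ to rescale. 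No amount of collecting terms converts the residual $(\partial J_\ast/\partial u\,\varphi_u)\cdot J_\ast{\bf N}$ into $(\overline{\varphi}_{uu}-\lambda^6\varphi_{uu})/(\lambda^4W^2)$, so the final ``repackaging'' you postulate would fail.

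Two further points. First, the skew-symmetry device $J_\ast^{T}(\partial J_\ast/\partial u)$ being skew is a genuinely different (and more principled) way to argue non-invariance than the paper's, which instead reads off from (\ref{q3.7}) that the vanishing conditions (\ref{q22}) would force $\lambda=0$; if you were proving non-invariance alone, your argument would be a legitimate alternative. Second, be aware that the paper's quantities $\overline{\varphi}_{uu}-\lambda^6\varphi_{uu}$ are being manipulated as scalars via (\ref{q20}); your attempt to rescue the statement by reading them ``through their component along the normal'' changes the assertion being proved, and under that reading the claimed prefactor is no longer forced. To match the paper you must adopt (\ref{q20}) as the definition of $L,M,N$ and work scalar-by-scalar from there.
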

\begin{proof}
Let $J:{\bf M}\rightarrow \overline{\bf M}$ be a conformal map and  $\varphi(u,v)$, $\overline{\varphi}=J\circ \alpha$ be the surface patches of $\bf M$ and $\overline{\bf M},$ respectively with at least second order derivatives being non-zero. Also let $\alpha(s)=\xi(s)\vec{t}(s)+ \mu(s)\vec{b}(s)$ be a rectifying curve on $\bf M$. Then
\begin{equation*}
\alpha^\prime(s)=\xi^\prime(s)\vec{t}(s)+\xi(s)\kappa(s)\vec{n}(s)+\mu^\prime(s)\vec{b}(s)-\mu(s)\tau(s)\vec{n}(s).
\end{equation*}
Chen \cite{2} obtained the conditions for the rectifying curve as:
\begin{equation*}
\xi^\prime(s)=1,\quad \xi(s)\kappa(s)=\mu(s)\tau(s),\quad \mu^\prime(s)=0.
\end{equation*}
Thus
$$\alpha^{\prime\prime}(s)=\kappa(s)\vec{n}(s).$$
Now
$$\kappa_n=\alpha^{\prime\prime}(s)\cdot{\bf N}=[\varphi_uu^{\prime\prime}+\varphi_vv^{\prime\prime}+\varphi_{uu}{u^\prime}^2+2 \varphi_{uv}u^\prime v^\prime+\varphi_{vv}{v^\prime}^2]\cdot {\bf N}$$
or 
$$\kappa_n= {u^\prime}^2 L+{v^\prime}^2 N+2u^\prime v^\prime M,$$
where $L,$ $M$ and $N$ are the coefficients of second fundamental form. In the Monge patch form, these coefficients are given by
\begin{equation}\label{q20}
L=\frac{\varphi_{uu}}{1+\varphi_u^2+\varphi_v^2(=W^2)},\quad M=\frac{\varphi_{uv}}{1+\varphi_u^2+\varphi_v^2},\quad N=\frac{\varphi_{vv}}{1+\varphi_u^2+\varphi_v^2}. 
\end{equation}
If $\overline{\alpha}(s)$ is a rectifying curve on $\overline{\bf M}$, then we have
\begin{equation*}
\overline{\kappa}_n-\lambda^2\kappa_n={u^\prime}^2(\overline{L}-\lambda^2 L)+{v^\prime}^2(\overline{N}- \lambda^2N)+2u^\prime v^\prime(\overline{M}- \lambda^2 M).
\end{equation*}
Using (\ref{q20}), it is easy to see that:
\begin{equation}\label{q21}
\overline{\kappa}_n-\lambda^2 \kappa_n=\frac{1}{\lambda^4 W^2}\Big[{u^\prime}^2(\overline{\varphi}_{uu}-\lambda^6 \varphi_{uu})+2u^\prime v^\prime (\overline{\varphi}_{uv}-\lambda^6 \varphi_{uv})+{v^\prime}^2(\overline{\varphi}_{vv}-\lambda^6 \varphi_{vv})\Big].
\end{equation}
Now since $u,v$ are two independent variables, from (\ref{q21}), we see that $\overline{\kappa}_n=\kappa_n$ if and only if 
\begin{equation}\label{q22}
\overline{\varphi}_{uu}-\lambda^6 \varphi_{uu}=0,\quad \overline{\varphi}_{uv}-\lambda^6 \varphi_{uv}=0,\quad \overline{\varphi}_{vv}-\lambda^6 \varphi_{vv}=0.
\end{equation}
In view of (\ref{q3.7}), it follows that the identities in (\ref{q22}) holds if and only if $\lambda=0,$ which is not the case. This shows that $\kappa_n$ is never conformaly invariant.
\end{proof}
\begin{corollary}
Let $J:{\bf M}\rightarrow \overline{\bf M}$ be a homothetic map. Then the normal curvature of the rectifying curve is not homothetic invariant under $J$ and the deviation is given by (\ref{q21}) while substituting $\lambda=c$.
\end{corollary}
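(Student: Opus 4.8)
The plan is to read this corollary as the constant-dilation specialization of the preceding theorem, since a homothety is by definition a conformal map whose dilation factor is a non-zero, non-unit constant $c$. Every identity derived there for a general conformal $J$ therefore remains valid once I set $\lambda(u,v)\equiv c$, and the whole argument reduces to a careful substitution together with a check that the degenerate case excluded in the theorem does not reappear.

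First I would take the deviation formula (\ref{q21}) as given and substitute $\lambda=c$. Because $c$ is constant we have $\lambda_u=\lambda_v=0$, so the three relations in (\ref{q3.7}) collapse to $\overline{\varphi}_{uu}=c\,\frac{\partial J_*}{\partial u}\varphi_u+c\,J_*\varphi_{uu}$ together with the analogous expressions for $\overline{\varphi}_{uv}$ and $\overline{\varphi}_{vv}$. Inserting $\lambda=c$ directly yields the claimed deviation
\[
\overline{\kappa}_n-c^2\kappa_n=\frac{1}{c^4W^2}\Big[{u'}^2(\overline{\varphi}_{uu}-c^6\varphi_{uu})+2u'v'(\overline{\varphi}_{uv}-c^6\varphi_{uv})+{v'}^2(\overline{\varphi}_{vv}-c^6\varphi_{vv})\Big],
\]
which is exactly (\ref{q21}) evaluated at $\lambda=c$. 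This settles the quantitative part of the statement.

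For the non-invariance I would reuse the criterion already proved in the theorem: $\overline{\kappa}_n=\kappa_n$ forces the three identities (\ref{q22}), which in the homothetic case read $\overline{\varphi}_{uu}=c^6\varphi_{uu}$, $\overline{\varphi}_{uv}=c^6\varphi_{uv}$ and $\overline{\varphi}_{vv}=c^6\varphi_{vv}$. Comparing these with the constant-$\lambda$ form of (\ref{q3.7}) and using the independence of $u$ and $v$ shows, just as in the theorem, that they can hold only when the dilation factor vanishes, i.e. $c=0$. The main, and essentially only, point requiring attention is precisely this: one must confirm that the exceptional value $\lambda=0$ ruled out in the theorem still lies outside the admissible homothetic range $c\notin\{0,1\}$, so that fixing $\lambda$ at a constant does not accidentally annihilate the bracket in (\ref{q21}) and manufacture spurious invariance. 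Since $c\neq 0$ by hypothesis, the bracket is genuinely non-zero, the deviation persists, and the normal curvature of the rectifying curve cannot be homothetic invariant. No computation beyond the specialization is needed.
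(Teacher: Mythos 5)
Your proposal is correct and matches the paper's (implicit) argument: the paper states this corollary without a separate proof, treating it as the direct specialization $\lambda=c$ of the preceding theorem and its deviation formula (\ref{q21}). Your additional check that the exceptional value $\lambda=0$ from the theorem stays outside the admissible range $c\notin\{0,1\}$ is a sensible explicit confirmation of what the paper leaves unstated.
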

\begin{corollary}
Let $J:{\bf M}\rightarrow \overline{\bf M}$ be an isometry, then the normal curvature of the rectifying curve is not invariant under $J$ and the deviation is given by (\ref{q21}) while substituting $\lambda=1$.
\end{corollary}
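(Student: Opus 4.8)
The plan is to strip the normal curvature down to its second--fundamental--form expression, push everything through the conformal scaling, and then read off non-invariance from the independence of $u'$ and $v'$.

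First I would use the rectifying hypothesis to simplify the second derivative of $\alpha$. Writing $\alpha=\xi\,\vec t+\mu\,\vec b$ and differentiating with the Serret--Frenet formulae, Chen's characterisation \cite{2} supplies $\xi'=1$, $\xi\kappa=\mu\tau$ and $\mu'=0$, whence $\alpha''=\kappa\,\vec n$. The normal curvature is therefore just $\kappa_n=\alpha''\cdot{\bf N}$; expanding $\alpha''$ in the chart $\varphi(u,v)$ through the chain rule as in (\ref{2}) and projecting onto ${\bf N}$ annihilates every tangential contribution, leaving
\begin{equation*}
\kappa_n={u'}^2L+2u'v'M+{v'}^2N,
\end{equation*}
with $L,M,N$ the coefficients of the second fundamental form of ${\bf M}$. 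Running the identical computation on $\overline{\bf M}$ gives $\overline{\kappa}_n={u'}^2\overline L+2u'v'\overline M+{v'}^2\overline N$.

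The core of the argument is then the conformal bookkeeping. In a Monge patch one has $L=\varphi_{uu}/W^2$, $M=\varphi_{uv}/W^2$, $N=\varphi_{vv}/W^2$ (and likewise on $\overline{\bf M}$), where $W^2=EG-F^2$. The conformal relations $\overline E=\lambda^2E$, $\overline F=\lambda^2F$, $\overline G=\lambda^2G$ give $\overline W^2=\lambda^4W^2$, which I would substitute into $\overline{\kappa}_n$. Since the first fundamental form scales by $\lambda^2$, the natural object to compare is the weighted difference $\overline{\kappa}_n-\lambda^2\kappa_n$; clearing the denominator $\overline W^2=\lambda^4W^2$ against the factor $\lambda^2$ promotes the unbarred second derivatives to weight $\lambda^6$, yielding
\begin{equation*}
\overline{\kappa}_n-\lambda^2\kappa_n=\frac{1}{\lambda^4 W^2}\Big[{u'}^2(\overline{\varphi}_{uu}-\lambda^6\varphi_{uu})+2u'v'(\overline{\varphi}_{uv}-\lambda^6\varphi_{uv})+{v'}^2(\overline{\varphi}_{vv}-\lambda^6\varphi_{vv})\Big],
\end{equation*}
which is the asserted deviation (and collapses to $\overline{\kappa}_n-\kappa_n$ in the isometric normalisation $\lambda=1$).

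To conclude that $\kappa_n$ is never conformally invariant I would argue pointwise in the independent quantities $u'$ and $v'$: the bracket can vanish identically only if each coefficient vanishes, i.e. $\overline{\varphi}_{uu}=\lambda^6\varphi_{uu}$, $\overline{\varphi}_{uv}=\lambda^6\varphi_{uv}$ and $\overline{\varphi}_{vv}=\lambda^6\varphi_{vv}$. Substituting the transformation rules (\ref{q3.7}) turns each of these into a vector identity in which $\lambda J_*\varphi_{uu}$ appears alongside the genuinely new terms $\lambda_uJ_*\varphi_u$ and $\lambda(\partial J_*/\partial u)\varphi_u$; these cannot collapse into a bare multiple $\lambda^6\varphi_{uu}$ of the original derivative unless the scale degenerates, i.e. $\lambda=0$, which is forbidden for a conformal map. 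I expect this final step to be the main obstacle: one must argue carefully that the extra first-order and $\partial J_*$ contributions in (\ref{q3.7}) are linearly independent of $J_*\varphi_{uu}$, so that no nonzero $\lambda$ can satisfy the three identities simultaneously. Controlling that independence --- rather than the elementary algebra of the scaling --- is where the real content sits, and it is precisely what rules out conformal invariance of the normal curvature.
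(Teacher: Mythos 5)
Your proposal is essentially the paper's own route: the corollary carries no separate argument in the paper and is obtained exactly as you do it, by running the conformal computation of the preceding theorem (Chen's conditions giving $\alpha''=\kappa\vec n$, the second-fundamental-form expression for $\kappa_n$, the Monge-patch scaling $\overline W^2=\lambda^4W^2$ leading to (\ref{q21}), and the independence of $u'$, $v'$ together with (\ref{q3.7}) to rule out invariance) and then setting $\lambda=1$. The only cosmetic difference is that you write $W^2=EG-F^2$ where the paper uses $1+\varphi_u^2+\varphi_v^2$, which agree in a Monge patch, and you flag more explicitly the linear-independence step that the paper leaves implicit.
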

\begin{theorem}
Let $J$ be a conformal map between two smooth surfaces ${\bf M}$ and $\overline{\bf M}$ and let $\alpha(s)$ be a rectifying curve on ${\bf M}$ such that $\overline{\alpha}(s)=J\circ \alpha(s)$ be a rectifying curve on $\overline{\bf M}$. Then the geodesic curvature of $\alpha(s)$ is not conformally invariant and the deviation is given by
\begin{equation*}
\overline{\kappa}_g-\lambda^2\kappa_g=\Big[\epsilon_{11}^2{u^\prime}^3+(2\epsilon_{12}^2-\epsilon_{11}^1){u^\prime}^2 v^\prime +(\epsilon_{22}^2-2\epsilon_{12}^1)u^\prime {v^\prime}^2-\epsilon_{22}^1{v^\prime}^3\Big] \sqrt{EG-F^2}.
\end{equation*}
where $\epsilon_{ij}^k(i,j,k=1,2)$ are given by (\ref{q16}).
\end{theorem}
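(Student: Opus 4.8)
The plan is to exploit the fact that the geodesic curvature is an \emph{intrinsic} quantity, so that it can be written purely in terms of the coefficients $E,F,G$ of the first fundamental form and their derivatives through the Christoffel symbols (this is the same principle used in Corollary \ref{crr}). Concretely, I would start from the classical Liouville-type expression
\[
\kappa_g = \sqrt{EG-F^2}\Big[\Gamma_{11}^2{u'}^3 + (2\Gamma_{12}^2-\Gamma_{11}^1){u'}^2v' + (\Gamma_{22}^2-2\Gamma_{12}^1)u'{v'}^2 - \Gamma_{22}^1{v'}^3 + (u'v''-u''v')\Big],
\]
with the $\Gamma_{ij}^k$ as in \eqref{q12}. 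Writing $\overline{\kappa}_g$ by the same formula on $\overline{\bf M}$, but with the barred data $\overline{E},\overline{F},\overline{G}$ and $\overline{\Gamma}_{ij}^k$, reduces the whole problem to comparing two copies of a single algebraic expression.

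Next I would feed in the two transformation laws already recorded in the paper: from \eqref{z2.8}, $\sqrt{\overline{E}\,\overline{G}-\overline{F}^2}=\lambda^2\sqrt{EG-F^2}$, and from \eqref{q13}, $\overline{\Gamma}_{ij}^k=\Gamma_{ij}^k+\varepsilon_{ij}^k$ with the $\varepsilon_{ij}^k$ given explicitly in \eqref{q16}. Substituting these into the expression for $\overline{\kappa}_g$ and splitting each bracket into a $\Gamma$-part and an $\varepsilon$-part, the combination $\overline{\kappa}_g-\lambda^2\kappa_g$ is arranged exactly so that the $\Gamma$-contributions, together with the parametrisation term $u'v''-u''v'$ that is common to both surfaces, cancel, leaving only the $\varepsilon$-contributions. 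Collecting the surviving terms according to the monomials ${u'}^3,\,{u'}^2v',\,u'{v'}^2,\,{v'}^3$ reproduces precisely the bracket in the statement, multiplied by $\sqrt{EG-F^2}$, which is the asserted deviation formula.

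Finally, to establish the non-invariance I would argue exactly as in Corollary \ref{crr}: since $u'$ and $v'$ are independent, the deviation vanishes identically if and only if each of the coefficients $\varepsilon_{11}^2$, $2\varepsilon_{12}^2-\varepsilon_{11}^1$, $\varepsilon_{22}^2-2\varepsilon_{12}^1$ and $\varepsilon_{22}^1$ is zero, and inspecting \eqref{q16} shows that this forces $\lambda_u=\lambda_v=0$, i.e. $\lambda$ constant. A genuinely conformal (non-homothetic) map has non-constant $\lambda$, so the deviation cannot vanish and $\kappa_g$ fails to be conformally invariant.

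The step I expect to be the real obstacle is the bookkeeping concealed in the phrase ``the same formula on $\overline{\bf M}$.'' The two geodesic curvatures are computed with respect to two different arc-length parameters $s$ and $\overline{s}$ linked by $ds/d\overline{s}=\lambda$, so a priori $u',v'$ and $u'',v''$ do not denote the same objects on the two surfaces; reconciling this reparametrisation is what pins down the exact power of $\lambda$ appearing in front of $\kappa_g$ and in front of the bracket, and it is the point at which the sign conventions and the normalisation of ${\bf N}$ must be handled with care. Once the clean cancellation of the $\Gamma$-terms and of the parametrisation term has been verified, everything that remains is routine algebra in the six quantities $\varepsilon_{ij}^k$.
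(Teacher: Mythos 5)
Your proposal follows essentially the same route as the paper: write $\kappa_g$ in the Liouville/Christoffel form (\ref{w23}), apply the same formula with barred data, and substitute $\overline{\Gamma}_{ij}^k=\Gamma_{ij}^k+\varepsilon_{ij}^k$ from (\ref{q13}) together with $\sqrt{\overline{E}\,\overline{G}-\overline{F}^2}=\lambda^2\sqrt{EG-F^2}$, so that the $\Gamma$-terms reassemble into $\lambda^2\kappa_g$ and only the $\varepsilon$-terms survive. The bookkeeping concern you flag at the end is legitimate but is equally unaddressed in the paper's own computation: carried out literally, the substitution leaves an extra factor $\lambda^2$ in front of the $\varepsilon$-bracket, and neither argument accounts for the reparametrisation $ds/d\overline{s}=\lambda$ of the barred curve, so your derivation matches the paper's to the same level of rigour.
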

\begin{proof}
Let $\alpha$ be a rectifying curve on a parametric surface ${\bf M}$ and $\overline{\alpha}(s)$ be a rectifying curve on $\overline{\bf M}$. In \cite{8} Shaikh and Ghosh showed that:
\begin{eqnarray*}
\kappa_g&=&(v^\prime u^{\prime\prime}-u^\prime v^{\prime\prime})(F^2-EG)+\frac{1}{2}{u^\prime}^3(2F_uE-E_vE-E_uF)\\
&&+\frac{1}{2}{u^\prime}^2v^\prime(2F_uF-E_vF-E_uG)+{u^\prime}^2v^\prime(G_uE-E_vF)+u^\prime{v^\prime
}^2(G_uF-E_vG)\\
&&+\frac{1}{2}u^\prime{v^\prime}^2(G_vE-2F_vF+G_uF)+\frac{1}{2}{v^\prime}^3(G_vF-2F_vG+G_uG).
\end{eqnarray*}
Using (\ref{q12}), the above equation turns out to be
\begin{equation}\label{w23}
\kappa_g=\Big[\Gamma_{11}^2{u^\prime}^3+(2\Gamma_{12}^2-\Gamma_{11}^1){u^\prime}^2 v^\prime +(\Gamma_{22}^2-2\Gamma_{12}^1)u^\prime {v^\prime}^2-\Gamma_{22}^1{v^\prime}^3+u^\prime v^{\prime\prime}-u^{\prime \prime}v^\prime\Big] \sqrt{EG-F^2}.
\end{equation}
In view of (\ref{q13}) and the above equation, $\overline{\kappa}_g$ is given by
\begin{equation}\label{q23}
\overline{\kappa}_g=\lambda^2\kappa_g+\Big[\epsilon_{11}^2{u^\prime}^3+(2\epsilon_{12}^2-\epsilon_{11}^1){u^\prime}^2 v^\prime +(\epsilon_{22}^2-2\epsilon_{12}^1)u^\prime {v^\prime}^2-\epsilon_{22}^1{v^\prime}^3\Big] \sqrt{EG-F^2}.
\end{equation}
\end{proof}
\begin{corollary}
Let $J$ be a homothetic map between two smooth surfaces $\bf M$ and $\overline{\bf M}$. Then the geodesic curvature of rectifying curve is homothetic invariant under $J$.
\end{corollary}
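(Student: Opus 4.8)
The plan is to derive this corollary directly from the deviation formula established in the preceding theorem, specializing the conformal factor to a constant. The preceding theorem gives, for a general conformal map, the relation
\begin{equation*}
\overline{\kappa}_g=\lambda^2\kappa_g+\Big[\epsilon_{11}^2{u^\prime}^3+(2\epsilon_{12}^2-\epsilon_{11}^1){u^\prime}^2 v^\prime +(\epsilon_{22}^2-2\epsilon_{12}^1)u^\prime {v^\prime}^2-\epsilon_{22}^1{v^\prime}^3\Big] \sqrt{EG-F^2},
\end{equation*}
where the correction terms $\epsilon_{ij}^k$ are the quantities recorded in (\ref{q16}). The key structural observation is that the entire bracketed deviation is assembled \emph{solely} out of the $\epsilon_{ij}^k$, so the whole question reduces to understanding how these six quantities behave for a homothetic map.

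First I would recall that for a homothetic map the dilation factor is a non-zero, non-unit constant $\lambda=c$, and hence its partial derivatives vanish identically, $\lambda_u=\lambda_v=0$. Next I would substitute this into the explicit expressions (\ref{q16}): since every $\epsilon_{ij}^k$ is a linear combination of $\lambda_u$ and $\lambda_v$ divided by $\lambda W^2$, each of the six terms $\epsilon_{11}^1,\epsilon_{11}^2,\epsilon_{12}^1,\epsilon_{12}^2,\epsilon_{22}^1,\epsilon_{22}^2$ vanishes. Consequently the bracketed factor in the displayed deviation formula is identically zero, and the relation collapses to $\overline{\kappa}_g=\lambda^2\kappa_g=c^2\kappa_g$.

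Finally I would match this against Definition \ref{def1}, where homothetic invariance of a quantity $f$ is precisely the requirement $\overline{f}=c^2 f$ for a dilation factor $c\neq\{0,1\}$. Since $\overline{\kappa}_g=c^2\kappa_g$ is exactly of this form, the geodesic curvature of the rectifying curve is homothetic invariant, which completes the argument.

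I do not anticipate a genuine obstacle here: the content of the corollary is a one-line specialization of the earlier theorem, and the only point requiring care is the bookkeeping verification that the deviation term depends on $\lambda$ purely through its first derivatives $\lambda_u,\lambda_v$, so that constancy of $\lambda$ annihilates it. This is immediate from the form of (\ref{q16}), so the proof is essentially a substitution rather than a new computation.
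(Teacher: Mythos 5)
Your proposal is correct and follows essentially the same route as the paper: specialize $\lambda$ to the constant $c$, observe from (\ref{q16}) that every $\epsilon_{ij}^k$ is linear in $\lambda_u,\lambda_v$ and hence vanishes, so (\ref{q23}) collapses to $\overline{\kappa}_g=c^2\kappa_g$, which is homothetic invariance in the sense of Definition \ref{def1}. The paper's own proof is a one-line appeal to the same two equations; you have merely written out the bookkeeping explicitly.
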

\begin{proof}
Let us suppose $\lambda=c$. Then the proof is a direct implication of (\ref{q16}) and (\ref{q23}).
\end{proof} 
\begin{corollary}\cite{8}
Let $J$ be an isometry between two smooth surfaces $\bf M$ and $\overline{\bf M}$. Then the geodesic curvature of rectifying curve is invariant under $J$.
\end{corollary}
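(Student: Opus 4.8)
The plan is to route the entire computation through the intrinsic expression $(\ref{w23})$, which presents $\kappa_g$ as $\sqrt{EG-F^2}$ times a polynomial in $u',v',u'',v''$ whose coefficients are the Christoffel symbols $\Gamma_{ij}^k$. The essential point is that $(\ref{w23})$ is a formula attached to \emph{any} parametrized curve on a surface, so it applies verbatim on $\overline{\bf M}$: because $\overline{\alpha}=J\circ\alpha$ is carried by the \emph{same} coordinate functions $u(s),v(s)$ on the patch $\overline{\varphi}=J\circ\varphi$, one writes down $\overline{\kappa}_g$ simply by replacing $E,F,G,\Gamma_{ij}^k$ by their barred counterparts, while the purely kinematic term $u'v''-u''v'$ stays exactly as it is.

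First I would assemble the two transformation ingredients. From the conformal relations $(\ref{z2.8})$ one gets $\overline{E}\,\overline{G}-\overline{F}^2=\lambda^4(EG-F^2)$, so $\sqrt{\overline{E}\,\overline{G}-\overline{F}^2}=\lambda^2\sqrt{EG-F^2}$; and from $(\ref{q13})$ one has $\overline{\Gamma}_{ij}^k=\Gamma_{ij}^k+\varepsilon_{ij}^k$ with the $\varepsilon_{ij}^k$ of $(\ref{q16})$. Substituting both into the barred form of $(\ref{w23})$ and distributing, the bracket splits into a $\Gamma$-part together with $u'v''-u''v'$, which reassembles $\kappa_g$ and carries off the kinematic term, and an $\varepsilon$-part. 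Collecting the six correction terms in the same pattern as $(\ref{w23})$ then yields
\[
\overline{\kappa}_g=\lambda^2\kappa_g+\Big[\varepsilon_{11}^2{u'}^3+(2\varepsilon_{12}^2-\varepsilon_{11}^1){u'}^2v'+(\varepsilon_{22}^2-2\varepsilon_{12}^1)u'{v'}^2-\varepsilon_{22}^1{v'}^3\Big]\sqrt{EG-F^2},
\]
which is exactly $(\ref{q23})$.

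For the non-invariance claim I would read off $(\ref{q16})$: each $\varepsilon_{ij}^k$ is a combination of $\lambda_u$ and $\lambda_v$ over $\lambda W^2$, so the entire deviation bracket vanishes identically precisely when $\lambda_u\equiv\lambda_v\equiv 0$, i.e. when $\lambda$ is constant. A genuine conformal map has non-constant $\lambda$, so the deviation is not identically zero and $\kappa_g$ is not conformally invariant. The homothetic corollary ($\lambda=c$) and the isometric corollary ($\lambda=1$) then drop out immediately, since in both cases $\lambda_u=\lambda_v=0$ forces every $\varepsilon_{ij}^k=0$.

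The step I expect to carry the real content is the derivation of $(\ref{q13})$--$(\ref{q16})$, i.e. that $\overline{\Gamma}_{ij}^k-\Gamma_{ij}^k$ is exactly the $\varepsilon_{ij}^k$ listed: this is the one genuinely computational passage, obtained by differentiating the conformal metric identities $(\ref{z2.8})$ through $(\ref{12a})$ and inserting them into the Christoffel formulas $(\ref{q12})$, whereupon the $\Gamma$-pieces cancel and only the $\lambda$-gradient terms survive. A secondary point I would keep scrupulously honest is the power of $\lambda$ multiplying $\sqrt{EG-F^2}$ in the deviation: since $J$ rescales arc length by $\lambda$, the image $\overline{\alpha}$ is generally not unit speed, so $(\ref{w23})$ must be applied on $\overline{\bf M}$ with the correct speed normalization, and it is precisely this bookkeeping that pins down the $\lambda$-exponent appearing in the final formula.
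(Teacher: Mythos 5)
Your proposal is correct and follows essentially the same route as the paper: the paper's own proof of this corollary simply sets $\lambda=1$ in the deviation formula (\ref{q23}) and notes via (\ref{q16}) (and Corollary \ref{crr}) that every $\varepsilon_{ij}^k$ vanishes when $\lambda$ is constant, which is exactly your concluding step. The only difference is that you re-derive (\ref{q23}) from (\ref{w23}) inside the argument (and rightly flag the arc-length normalization, which is harmless here since $\lambda=1$ preserves unit speed), whereas the paper just cites the preceding theorem.
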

\begin{proof}
For isometry, we have $\lambda=1$. Thus in view of Corollary \ref{crr} and the equations (\ref{q16}), (\ref{q23}), the claim is straightforward.
\end{proof}
\section{acknowledgment}
 The third author greatly acknowledges to The University Grants Commission, Government of India for the award of Junior Research Fellow.

\end{document}